\newcommand{\N}{\mat{N}}
\newcommand{\Nb}{\N}   
\newcommand{\Rb}{\N}   
\newcommand{\Sf}{W}    
\newcommand{\A}{\mat{A}}
\renewcommand{\b}{\vec{b}}
\newcommand{\R}{\mat{R}}
\newcommand{\X}{\mat{X}}
\renewcommand{\L}{\mat{L}}
\newcommand{\D}{\mat{D}}
\renewcommand{\P}{\mat{P}}
\newcommand{\F}{\mat{F}}
\renewcommand{\v}{\vec{v}}
\renewcommand{\u}{\vec{u}}
\newcommand{\veta}{\vec{\eta}}
\newcommand{\M}{\mat{M}}
\newcommand{\e}{\vec{e}}
\newcommand{\nb}{\vec{n}}
\newcommand{\I}{\mat{I}}
\newcommand{\B}{\mat{B}}
\newcommand{\leta}{\overline{\eta}}
\newtheorem{theorem}{Theorem}[section]
\newtheorem{remark}[theorem]{Remark}
\newtheorem{example}[theorem]{Example}
\newtheorem{lemma}[theorem]{Lemma}
\newcommand{\myremarkend}{$\spadesuit$}
\newcommand{\mat}[1]{\mbox{\boldmath$#1$}}
\renewcommand{\vec}[1]{\boldsymbol{#1}}
\newcounter{opt}
\newenvironment{opt}{\refstepcounter{opt}\begin{equation}\begin{array}{ll}}{\end{array}\tag{P\theopt}\end{equation}}
\title{Algebraic Conditions for Stability in Runge-Kutta Methods and Their Certification via Semidefinite Programming}
\author{
    Austin Juhl\thanks{Department of Mathematical Sciences, NJIT, Newark, NJ 07102, \newline \indent \hspace{2mm} aj659@njit.edu.}, 
    David Shirokoff\thanks{Department of Mathematical Sciences, NJIT, Newark, NJ 07102, \newline \indent \hspace{2mm} david.g.shirokoff@njit.edu.}
}
\date{May 2024}
\begin{document}

\maketitle

\begin{abstract}
In this work, we present approaches to rigorously certify $A$- and $A(\alpha)$-stability in Runge-Kutta methods through the solution of convex feasibility problems defined by linear matrix inequalities.  We adopt two approaches. The first is based on sum-of-squares programming applied to the Runge-Kutta $E$-polynomial and is applicable to both $A$- and $A(\alpha)$-stability.  In the second, we sharpen the algebraic conditions for $A$-stability of Cooper, Scherer,  T{\"u}rke, and Wendler to incorporate the Runge-Kutta order conditions.  We demonstrate how the theoretical improvement enables the practical use of these conditions for certification of $A$-stability within a computational framework.  We then use both approaches to obtain rigorous certificates of stability for several diagonally implicit schemes devised in the literature. 
\end{abstract}


\noindent{\bf Keywords:} Runge-Kutta, $A$-stability, $A(\alpha)$-stability, algebraic characterization, semidefinite programming.

\medskip\noindent
{\bf AMS Subject Classifications:} 65L06, 65L07, 65L20.

\section{Introduction}

In recent years, sum-of-squares optimization and semidefinite programming have become valuable tools in developing rigorous certificates of stability in dynamical systems.  Such certificates are useful in developing reliable algorithms and software.  Examples range from 
stochastic linear-quadratic control \cite{YaoZhangZhou2001}, 
switched stability of nonlinear systems \cite{AhmadiJungers2013},
reinforcement learning \cite{JinLavaei2020}, 
stability in partial differential equations \cite{GahlawatValmorbida2017}, and 
applications for robotic control \cite{MajumdarAhmadiTedrake2013}.  
Additionally, control systems have benefited from analogous certification techniques, including verifying the stability of reinforcement learning policies \cite{DroriTeboulle2014} or variable step-size convergence bounds for gradient descent \cite{Grimmer2024}. In this work, we adopt a similar strategy to establish rigorous certificates for $A$- and $A(\alpha)$- stability in Runge-Kutta methods through the computational solution of linear matrix inequalities via semi-definite programming.

\subsection{Background}
Numerical stability is a critical property for a time-integration scheme.  In the context of Runge-Kutta (RK) methods applied to stiff differential equations, $A$-stability (or the related $A(\alpha)$-stability), introduced by Dahlquist \cite{Dahlquist1963}, is one of the most basic and practically important notions of stability. As a result of the reduced degrees of freedom in the Butcher coefficients, Runge-Kutta methods that are $A$-stable with stability functions that are near-optimal order approximations have been well characterized \cite{Butcher1977, Wanner1980, Hairer1982, Norsett1975} (cf. \cite{HairerWanner1981} for algebraically stable methods). However, many existing and newly developed Runge-Kutta methods in the literature admit a diagonally implicit structure and lie outside the application of these previous results. While one strategy for verifying $A$-stability is via \emph{Sturm sequences} \cite{HairerWanner1996}, our approach here is rooted in convex optimization.  

Dating back to the work of Dahlquist (e.g., \cite{Dahlquist1978}), it has been known that $A$-stability is equivalent to the RK stability function satisfying a particular convex feasibility problem --- i.e., up to a transformation, the stability function lies in the convex cone of \emph{positive functions} \cite[Chapter IV.5]{HairerWanner1996}.  Subsequent convex feasibility conditions for $A$-stability also include: (1) The RK $E$-polynomial lying in the convex cone of non-negative polynomials; and (2) Satisfying a set of algebraic conditions, which we refer to as the CSTW conditions, developed by Cooper \cite{cooper:1986}, Scherer and T{\"u}rke \cite{scherer_turke:1989}, and Scherer and Wendler \cite{scherer_wendler:1994}. The CSTW conditions form a feasibility problem over the convex cone of semidefinite matrices.  It is worth noting that semidefinite matrix conditions exist for stronger notions of stability, such as \emph{algebraic stability} (or the related concept of $B$-stability) for Runge-Kutta methods \cite[Chapter IV.12]{HairerWanner1996}, and $G$-stability \cite{dahlquist1975stability} for linear multistep methods.  

\subsection{Contributions and Organization of the Paper}
The focus of this work lies in the practical aspects of implementing and generating rigorous certificates of stability through solutions of convex feasibility problems for $A$- and $A(\alpha)$-stability.  The main contributions are twofold.  First, we provide a theoretical contribution that sharpens the CSTW conditions. The CSTW conditions do not take into account the fact that the stability function is a $p$th order approximation to the exponential --- which ultimately limits their practical use in providing rigorous certificates via computer-assisted means.  Our main theoretical result (Theorem~\ref{Thm:CSTWModified}) modifies the CSTW conditions to account for the RK order conditions, which then enables the rigorous certification of stability via computational approaches.  In addition to sharpening the CSTW conditions, we provide details from sum-of-squares optimization for certifying $A$- and $A(\alpha)$-stability via testing the non-negativity of the $E$-polynomial.

Our second contribution provides rigorous certificates of stability for several recently devised schemes in the literature. These schemes include the 7-stage 4th order and 12-stage 5th order schemes from \cite{BiswasKetchesonSeiboldShirokoff2023} as well as the 9-stage 6th order and 13-stage 8th order schemes from \cite{AlamriKetcheson2024}.

The paper is organized as follows: \S\ref{sec:RKBackground} and \S\ref{sec:LMI_Background} cover background material on Runge-Kutta methods and linear matrix inequalities, respectively. Linear matrix inequalities for $A$- and $A(\alpha)$- stability of Runge-Kutta methods are then developed in \S\ref{sec:algcond}. This section also includes our main theoretical result along with illustrative examples. We then turn our attention in \S\ref{sec:Examples} to constructing rigorous certificates of $A$- and $A(\alpha)$-stability via a computer-assisted solution to the associated linear matrix inequalities.  A summary and outlook are provided in \S\ref{sec:Conclusion}.  The Appendix and companion Github repository provide certificates of stability for the schemes we examine as well as the supporting numerical code.

\section{Runge-Kutta Background}\label{sec:RKBackground}
Numerical time integration of an ordinary differential equation ($\mathbb{V} = \mathbb{R}^m$ or $\mathbb{C}^m$)
\begin{equation}\label{eq:IVP}
    \u^{\prime}(t)  = f\big(t,\u(t) \big) \, , \quad \u(0) = \u_{0} \, ;  \quad \u\in \mathbb{V} \, , 
    \quad f : \mathbb{R}\times \mathbb{V} \to \mathbb{V} \, ,
\end{equation}
results in a discrete-in-time dynamical system --- one of the most common being Runge-Kutta (RK) methods. Runge-Kutta methods, discretize \eqref{eq:IVP} with $s$ stages as
    \begin{subequations}\label{eq:RK_methods}
        \begin{align*}
            \vec{h}_i & = \u_n + \Delta t \, \sum_{j = 1}^{s} a_{ij} \, f(t_n+c_j \Delta t, \vec{h}_j)\;, \quad i = 1,2, \ldots,s\; \\ 
            \u_{n+1} & = \u_{n}+\Delta t \, \sum_{j = 1}^{s} b_j \, f(t_n+c_j \Delta t, \vec{h}_j)\; \, ,
        \end{align*}
    \end{subequations}
    where $\u_n \approx \u(t_n)$ and $t_n$ the $n$th time step. The RK scheme is defined by coefficients
    \begin{equation*}
        \A = [a_{ij}]_{i,j=1}^s \, , \qquad \b = [b_1, \ldots, b_s]^T \, , \quad\textrm{and}\quad \vec{c} = [c_1, \ldots, c_s]^T := \A \e \, ;\quad\textrm{where}\quad \e=[1,\ldots,1]^T\, .
    \end{equation*}
    
\subsection{Linear Stability}\label{subsec:linstability}
For the scalar linear case where $f(\u) = \lambda \u$ with $\lambda \in \mathbb{C}$ and fixed $\Delta t$, the Runge-Kutta dynamics \eqref{eq:RK_methods} are expressed as:
\begin{align*}
\u_{n+1} = \Sf(z) \u_n, \quad \text{where} \quad z := \lambda \Delta t \, .
\end{align*}
Here $\Sf(z)$ is the \emph{stability function} of the scheme given by:
\begin{equation}\label{eq:stabilityfunction1}
\Sf(z) := 1 + z \b^T (\I - z \A)^{-1} \e = \frac{\det(\I-z\A+z \e \b^T )}{\det(\I-z\A)} = \frac{N(z)}{D(z)}.
\end{equation}
In equation \eqref{eq:stabilityfunction1}, $N(z)$ and $D(z)$ represent polynomials of degree at most $s$, sharing no common factors, and $\I$ is the identity matrix.  

A method exhibits a \emph{degenerate} stability function if $\deg N \leq s-1$ and $\deg D \leq s-1$; otherwise, the stability function is considered non-degenerate.  Degenerate stability functions occur when $\det(\I-z\A+z \e \b^T )$ and $\det(\I - z\A)$ share a common root. It is worth noting that degenerate stability functions can appear in practice, for instance, in constructing RK schemes that avoid order reduction (e.g., \cite{BiswasKetchesonSeiboldShirokoff2023}). 

The dynamics \eqref{eq:RK_methods} are \emph{stable} for a given $z$ if $|\Sf(z)| \leq 1$. We will primarily be concerned with numerical schemes $(\A,\b)$ that are $A$-stable, i.e., those where
\begin{alignat*}{2} 
    \textrm{$A$-stability:} &\qquad 
    |\Sf(z)| \leq 1  \qquad \textrm{for} \qquad z \in \mathbb{C}_- = \{ z \in \mathbb{C} \; : \; \textrm{Re} \, z \leq 0 \} \, .
\end{alignat*}
The $A$-stability criteria ensures that the discrete dynamics \eqref{eq:RK_methods} are stable whenever the linear ODE \eqref{eq:IVP} is stable. For schemes that are not $A$-stable, it is useful to characterize the largest $\alpha > 0$ for which the RK dynamics are stable for all $z$ in the sector $S_{\alpha}$ opening into the left-half plane with angle $\alpha$
\begin{alignat*}{2} 
    A(\alpha)\textrm{-stability:} & \qquad &  |\Sf(z)| \leq 1  \qquad \textrm{for} \qquad & z \in S_{\alpha} = \{ z \in \mathbb{C} \, : \, |\arg(-z)| \leq \alpha \}  \, . 
\end{alignat*}
Note that $A$-stability is equivalent to $A(\alpha)$-stability with $\alpha = \frac{\pi}{2}$.
Both $A$- and $A(\alpha)$- stability can be recast in terms of the non-negativity of the $E$-polynomial \cite{HairerWanner1996}. 

In a slight generalization from the standard definition, to allow for $\alpha \neq \frac{\pi}{2}$, the (generalized) $E$-polynomial is
\begin{align}\label{Def:genEpoly}    
    E(y;\alpha) = |D(ye^{-i\alpha})|^2 - |N(ye^{-i\alpha})|^2 \, 
    \quad \textrm{where} \quad 0 < \alpha \leq \frac{\pi}{2} \,, \; y \in \mathbb{R} \, .
\end{align}

A scheme is then $A(\alpha)$-stable if (for $\alpha=\frac{\pi}{2}$ see \cite[Chapter IV.3]{HairerWanner1996}) $\Sf(z)$ is analytic in the interior of $S_{\alpha}$ and 
\begin{align}\label{Eq:Nonneg_Epoly}
    E(y; \alpha) \geq 0 \qquad \textrm{for all} \qquad y \geq 0 \, . 
\end{align}
The condition \eqref{Eq:Nonneg_Epoly} guarantees \(A(\alpha)\)-stability by ensuring \(|\Sf(z)| \leq 1\) for all \(z\in\partial S_{\alpha}\). As \(\Sf(z)\) is analytic within the interior of \(S_{\alpha}\), the maximum modulus principle implies that \(|\Sf(z)|\) is maximized on \(\partial S_{\alpha}\), confirming \(A(\alpha)\)-stability.

\subsection{Accuracy and Order Conditions}
For an RK scheme to achieve \emph{(classical) order $p$} on linear, autonomous problems, the stability function must approximate the exponential function to order $p$, such that $\Sf(z) = e^z + \mathcal{O}(z^{p+1})$ as $z \rightarrow 0$. This approximation is achieved provided the RK coefficients $(\A,\b)$ satisfy the \emph{tall-tree} order conditions of order $p$:
\begin{equation}\label{eq:ordercondition}
    \b^T \A^{j-1} \, \e = \frac{1}{ j!} \,  
	\quad \text{for} \quad
	1\leq j \leq p \, ,
\end{equation}
as outlined in \cite{HairerNorsettWanner1993}. Additional RK order conditions, i.e., the non-tall-tree conditions, are further required to achieve accuracy of order $p$ on general (nonlinear) ODEs.  

If $\alpha = \frac{\pi}{2}$, the $p$th order conditions \eqref{eq:ordercondition} imply that $E(y; \frac{\pi}{2})$ admits a factor of $y^{2j}$ \cite[Chapter IV.3]{HairerWanner1996}:
\begin{align}\label{EPoly_Asymptotics}
    E\left(y; \frac{\pi}{2} \right) = \mathcal{O}(y^{2j} ) \, , 
    \qquad \textrm{as} 
    \quad y \rightarrow 0 
    \quad \textrm{where}, 
    \quad j \geq \left\lfloor \frac{p}{2} \right\rfloor+1 \, . 
\end{align}
In general for $\alpha < \frac{\pi}{2}$, the polynomial (as implied by the calculations in \cite{cooper:1986})
\begin{align}\label{EPoly_Asymptotics2}
    E\left(y; \alpha \right) = \mathcal{O}(y) \, , 
    \qquad \textrm{as} 
    \quad y \rightarrow 0 \, .
\end{align}

\section{Linear Matrix Inequalities Background}\label{sec:LMI_Background}
Certifying numerical stability will reduce to a semidefinite feasibility program involving linear matrix inequalities (LMIs). This section reviews LMIs and their relationships to convex feasibility as well as non-negative polynomials.

\subsection{Feasibility and Convexity of Linear Matrix Inequalities} 
\label{subsec:Convex_Opt}
Given $\P, \N_1, \, \ldots, \, \N_n \in \mathbb{S}^n$, the set of $n \times n$ real symmetric  matrices, a \emph{linear matrix inequality} (LMI) is defined as:
\begin{align}\label{Def:LMI}
    \F(\veta) := \P + \sum_{j = 1}^d \eta_j \N_j \succeq 0 \,  ,
\end{align}
where $\F \succeq 0$ indicates that $\F$ is positive semi-definite ($\F \succ 0$ is positive definite). The LMI \eqref{Def:LMI} is \emph{feasible} if there exists a vector $\veta$ such that $\F(\veta) \succeq 0$; otherwise the LMI is \emph{infeasible}. The linearity of $\F(\veta)$ ensures that the set: 
\begin{align*}
    \mathcal{C} = \{ \veta \in \mathbb{R}^d \; : \; \F(\veta) \succeq 0\} \, , 
\end{align*}
containing all $\veta$ that satisfy the LMI \eqref{Def:LMI}, is convex. Thus, assessing the feasibility of $\F(\veta)$ --- determining whether $\mathcal{C}$ is non-empty --- is a convex feasibility problem and can be solved via semidefinite programming. 

Due to the matrix structure of \eqref{Def:LMI}, the feasible set $\mathcal{C}$ may lie in an affine plane having dimension less than $d$, potentially resulting in $\mathcal{C}$ having an empty interior. To quantify this feature, the dimension for the set $\mathcal{C}$ may be characterized in terms of the affine hull: 
\begin{align*}
    \textrm{aff}(\mathcal{C}) = \{ \mu_1 \veta_1 + \mu_2 \veta_2 \; : \; \mu_1, \mu_2 \in \mathbb{R}, \; \textrm{and }  \veta_1, \veta_2 \in \mathcal{C}\} =  
    \veta_0 + \mathbb{V} \, ,     
\end{align*}
where $\veta_0 \in \mathcal{C}$ and $\mathbb{V}$ is a vector space. The dimension of $\mathcal{C}$ is defined as $\dim\mathbb{V}.$

\subsection{Non-negative Polynomials as Linear Matrix Inequalities}\label{subsec:BackgrounPolyLMI}
Here, we review characterizing the non-negativity of a polynomial through linear matrix inequalities.

Let $\mathbb{R}[x]$ denote the set of single-variable polynomials with real coefficients. Two convex cones within $\mathbb{R}[x]$ include the set of non-negative polynomials, satisfying $p(x) \geq 0$ for all $x \in \mathbb{R}$, and the set of sum-of-squares (SOS) polynomials, where each polynomial can be expressed as $\sum_{j=1}^{\ell} q_j^2(x)$ for some $\{ q_j\}_{j =1}^{\ell} \in \mathbb{R}[x]$. In one dimension, these two cones coincide, meaning that a polynomial $p(x) \in \mathbb{R}[x]$ is non-negative for all real numbers if and only if it can be decomposed into a sum-of-squares \cite[Theorem 2.5]{Lasserre2010}. For general multivariate polynomials, every SOS polynomial is non-negative; however, the reverse is not true. 

Determining whether a polynomial is SOS and non-negative can be formulated as an LMI. Consider the subspace of symmetric matrices $\mathcal{N}_{m}$ defined by:
\begin{align*}
    \mathcal{N}_{m} := \big\{  \N \in \mathbb{S}^{m} \; : \; \vec{y}^T \N \vec{y} = 0  \big\} \, , \quad\textrm{where}\quad \vec{y}=[1,y,\ldots,y^{m-1}] \, .
\end{align*}
The components $(n_{i,j})_{i,j=1}^{m}$ of a symmetric matrix $\N \in \mathcal{N}_{m}$ must satisfy:
\begin{align*}
    \sum_{i + j=r}^{m} n_{i,j}  = 0 \qquad \textrm{for} \quad r = 2, \; 3, \;\ldots, \; 2m \,.    
\end{align*}
Any polynomial $p\in \mathbb{R}[x]$ can then be expressed non-uniquely in factorized form as:
\begin{align}\label{Eq:PolyForm}
    p(x) = p_0 + p_1 y + \ldots + p_{2m-2} \, y^{2m-2} = \vec{y}^T (\P + \N) \vec{y} \, , 
\end{align}
where $\N \in \mathcal{N}_{m}$ and
\begin{align*}
    \P = 
        \begin{pmatrix}
        p_0 & \tfrac{1}{2} p_1 & & \\
        \tfrac{1}{2} p_1 & p_2 & \ddots \vphantom{\ddots} &  \\
        & \ddots & \ddots & \tfrac{1}{2}  p_{2m-3} \\  
        & &  \tfrac{1}{2} p_{2m-3} & p_{2m-2} 
    \end{pmatrix}    \in \mathbb{R}^{m \times m} \, .
\end{align*}
    The polynomial $p$, described in \eqref{Eq:PolyForm}, is a sum-of-squares if and only if there exists an $\N \in \mathcal{N}_{m}$ such that $\P + \N \succeq 0$. If $\mat{Q}^T \mat{Q} = \P + \N$ is a Cholesky factorization, then defining $q_j(y) = \e_j^T \mat{Q} \vec{y}$, where $\e_j$ is the $j$th unit vector, admits 
\begin{align*}
    p(y) = \| \mat{Q} \vec{y}\|^2 = \sum_{j=1}^{m} q_j^2(y) \, . 
\end{align*}Conversely, if $p$ is an SOS, the coefficients of $q_j$ define a positive definite matrix $(\P+\N)$. 

To solve subsequent SDPs, it is useful to define a basis for $\mathcal{N}_{m}$, which has dimension 
\begin{equation}\label{eq:d}
    d := \frac{1}{2}(m-1)(m-2).
\end{equation}
First, consider the index set 
\begin{align*}
    \mathcal{S}_{m} := \big\{  (i,j) \in \mathbb{N}^{2} \; | \; 1 \leq i \leq m-2 \, ,  i + 2 \leq j \leq m \big\} \, \qquad 
    \textrm{for} \qquad m \geq 1 \,, 
\end{align*}
which has $d$ elements and is empty for $m = 1,2$. A basis for $\mathcal{N}_{m}$ is given by $\{\Nb_l\}_{l = 1}^{d}$:

\begin{equation*}
\Nb_{l}=\e_i\e_j^T+\e_j\e_i^T-\e_{\left\lfloor\frac{i+j}{2}\right\rfloor}\e_{\left\lceil\frac{i+j}{2}\right\rceil}^T-\e_{\left\lceil\frac{i+j}{2}\right\rceil}\e_{\left\lfloor\frac{i+j}{2}\right\rfloor}^T \, ,
\end{equation*}
where 
\begin{align*}
    (i,j)\in\mathcal{S}_{m} \qquad  \textrm{and} \qquad 
    l=m(i-1)-\frac{1}{2}i(i+3) +j \, . 
\end{align*}
For example, \( m=3 \) has the basis matrix:
\[
\Nb_1 = \begin{bmatrix}
0 & 0 & 1 \\
0 & -2 & 0 \\
1 & 0 & 0
\end{bmatrix},
\]
while \( m=4 \) has three basis matrices:
\[
\Nb_1 = \begin{bmatrix}
0 & 0 & 1 & 0 \\
0 & -2 & 0 & 0 \\
1 & 0 & 0 & 0 \\
0 & 0 & 0 & 0
\end{bmatrix}, \quad
\Nb_2 = \begin{bmatrix}
0 & 0 & 0 & 1 \\
0 & 0 & -1 & 0 \\
0 & -1 & 0 & 0 \\
1 & 0 & 0 & 0
\end{bmatrix}, \quad
\Nb_3 = \begin{bmatrix}
0 & 0 & 0 & 0 \\
0 & 0 & 0 & 1 \\
0 & 0 & -2 & 0 \\
0 & 1 & 0 & 0
\end{bmatrix}.
\]

With these notations, a polynomial $p(y)$, given by \eqref{Eq:PolyForm}, is non-negative if and only if there exists $\veta \in \mathbb{R}^{d}$ such that the following LMI is satisfied:
\begin{align}\label{Eq:SOS_LMI}
    \F(\veta) := \P + \sum_{l=1}^{d} \eta_l\, \Nb_l \succeq 0 \, .
\end{align}

\section{$A$- and $A(\alpha)$-Stability as Linear Matrix Inequalities}\label{sec:algcond}
To certify the stability of Runge-Kutta methods, we utilize linear matrix inequalities in two approaches. The first approach leverages the non-negativity of the (generalized) $E$-polynomial, suitable for both $A$- and $A(\alpha)$-stability. The second approach uses the algebraic conditions for $A$-stability established by Cooper \cite{cooper:1986}, Scherer and Türke \cite{scherer_turke:1989}, and Scherer and Wendler \cite{scherer_wendler:1994}. Our main theoretical contribution sharpens these algebraic conditions, which enables their practical use within an SDP framework.

\subsection{The $E$-polynomial LMI for $A(\alpha)$-stability}\label{subsec:EpolyLMI}
An LMI for $A(\alpha)$-stability follows immediately by applying the LMI for general non-negative polynomials, as described in \S~\ref{subsec:BackgrounPolyLMI}, to $E(y ; \alpha)$ in \eqref{Eq:Nonneg_Epoly}.

In particular, based on the asymptotics \eqref{EPoly_Asymptotics}--\eqref{EPoly_Asymptotics2}, let $F(y)$ be the polynomial from which the largest even monomial (ensured by the order conditions) has been factored out:
\begin{alignat*}{2}
    \textrm{If} \; \alpha &= \frac{\pi}{2} \; &: \qquad F(y) &:= y^{-2\kappa} \, E(y; \frac{\pi}{2}), \quad \kappa = \left\lfloor \frac{p}{2} \right\rfloor + 1 \,  , \\ \nonumber 
    \textrm{If} \; \alpha &< \frac{\pi}{2} \; &: \qquad F(y) &:= y^{-2} \, E(y^2; \alpha)  \,  .
\end{alignat*}
In both cases, $F(y)$ is an even polynomial:
\begin{align}\label{Eq:CoefficientsOfp}
    F(y) = p_0 + p_2 \, y^2 + \ldots + p_{2m-2} \, y^{2m -2} \, , 
\end{align}
where the coefficients of $F$ are polynomial functions of the RK scheme coefficients and, in the case of $\alpha<\tfrac{\pi}{2}$, polynomial functions of $\beta:=\cos(\alpha)$.

Combining the polynomial $F(y)$ and the LMI \eqref{Eq:SOS_LMI}, it follows: 
\begin{lemma}\label{E_poly_lemma}
    A scheme is $A(\alpha)$-stable if: 
\begin{enumerate}
    \item $\A$ has eigenvalues outside $S_{\alpha}$ (so that $W(z)$ is analytic in $S_{\alpha}$); and
    \item The LMI \eqref{Eq:SOS_LMI} is feasible for the $E$-polynomial \eqref{Eq:CoefficientsOfp}. 
\end{enumerate}
\end{lemma}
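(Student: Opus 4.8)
The plan is to show that the two enumerated hypotheses together deliver exactly the two requirements established earlier in \S\ref{subsec:linstability} for $A(\alpha)$-stability: namely (i) that the stability function $\Sf(z)$ is analytic in the interior of the sector $S_{\alpha}$, and (ii) that the $E$-polynomial satisfies $E(y;\alpha) \geq 0$ for all $y \geq 0$. Since the excerpt already records that these two conditions imply $A(\alpha)$-stability via the maximum modulus principle, the lemma reduces to verifying that the two hypotheses are just restatements of (i) and (ii) in a form amenable to the LMI machinery of \S\ref{subsec:BackgrounPolyLMI}.

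First I would handle the analyticity requirement. From the representation $\Sf(z) = N(z)/D(z)$ in \eqref{eq:stabilityfunction1}, the poles of $\Sf$ are among the roots of $D(z) = \det(\I - z\A)$, which occur precisely at $z = 1/\mu$ for each nonzero eigenvalue $\mu$ of $\A$. The hypothesis that $\A$ has all eigenvalues outside $S_{\alpha}$ translates, under the reciprocal map, into the statement that every pole $1/\mu$ lies outside the (closed) sector, so no singularity of $\Sf$ occurs in the interior of $S_{\alpha}$; hence $\Sf$ is analytic there. This is essentially the content of hypothesis (1) and requires only a careful check that the reciprocal map sends the complement of $S_{\alpha}$ to the region avoiding the poles, together with attention to the degenerate case where $N$ and $D$ share roots, in which the offending factor cancels and does not produce a pole.

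Next I would connect hypothesis (2) to the nonnegativity condition \eqref{Eq:Nonneg_Epoly}. The key observation is that the auxiliary polynomial $F(y)$ was defined precisely by dividing $E$ by its guaranteed even monomial factor: for $\alpha = \frac{\pi}{2}$ one has $F(y) = y^{-2\kappa} E(y;\frac{\pi}{2})$, and for $\alpha < \frac{\pi}{2}$ one has $F(y) = y^{-2} E(y^2;\alpha)$. In each case, the nonnegativity of $E(y;\alpha)$ for all $y \geq 0$ is equivalent to the nonnegativity of the even polynomial $F(y)$ for all real $y$: the factored-out monomial $y^{2\kappa}$ (respectively the substitution $y \mapsto y^2$ together with $y^{-2}$) is manifestly nonnegative and bijective onto the relevant domain, so dividing or multiplying by it preserves the sign. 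Since $F$ is a genuine (even) single-variable polynomial, the one-dimensional equivalence of nonnegative and sum-of-squares cones \cite[Theorem 2.5]{Lasserre2010} applies, and the feasibility of the LMI \eqref{Eq:SOS_LMI} built from the coefficients \eqref{Eq:CoefficientsOfp} is exactly equivalent to $F(y) \geq 0$ for all real $y$, hence to $E(y;\alpha) \geq 0$ for all $y \geq 0$.

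The main obstacle I anticipate is bookkeeping rather than conceptual: one must verify carefully, in each of the two cases $\alpha = \frac{\pi}{2}$ and $\alpha < \frac{\pi}{2}$, that the manipulation relating $F$ and $E$ is an exact sign-preserving equivalence over the correct domain (real $y$ for $F$ versus $y \geq 0$ for $E$), and that the order-condition asymptotics \eqref{EPoly_Asymptotics}--\eqref{EPoly_Asymptotics2} indeed guarantee the claimed factor divides $E$ cleanly so that $F$ is a polynomial of the stated even form. One subtlety worth flagging is that the lemma is stated as a sufficient (``if'') condition, so I need only establish the forward implication; I would not need to argue that feasibility of the LMI is necessary for stability, which sidesteps any concern about whether analyticity could fail while the $E$-polynomial remains nonnegative.
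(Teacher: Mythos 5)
Your proposal is correct and follows essentially the same route as the paper, which states the lemma as an immediate consequence of combining the maximum-modulus argument of \S\ref{subsec:linstability} (hypothesis (1) giving analyticity of $\Sf$ in $S_\alpha$, and $E(y;\alpha)\ge 0$ giving $|\Sf|\le 1$ on $\partial S_\alpha$) with the one-variable equivalence of nonnegativity and sum-of-squares from \S\ref{subsec:BackgrounPolyLMI} applied to $F(y)$. Your writeup merely makes explicit the sign-preserving bookkeeping between $F$ and $E$ and the invariance of the sector under the reciprocal map, which the paper leaves implicit.
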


\subsection{The CSTW Algebraic Conditions and LMI for $A$-stability} 
Closely related to the $E$-polynomial approach is an LMI based on algebraic conditions for $A$-stability, laid out in a line of work by Cooper \cite{cooper:1986}, Scherer and T{\"u}rke \cite{scherer_turke:1989} and Scherer and Wendler \cite{scherer_wendler:1994}.  

Cooper \cite{cooper:1986} initially established sufficient algebraic conditions for $A$-stability by factorizing the $E$-polynomial into a quadratic form. Scherer and T{\"u}rke \cite{scherer_turke:1989} later re-derived almost identical conditions by applying the Kalman-Yakubovich-Popov (KYP) Lemma to $\Sf(z)$; this further showed necessity (in addition to sufficiency) of the algebraic conditions of Cooper. Subsequent work by Scherer and Wendler \cite{scherer_wendler:1994} provided even more general algebraic conditions applicable to degenerate stability functions.

\begin{theorem}\label{SW:1994} (Scherer--Wendler, Theorem 6.1 in \cite{scherer_wendler:1994}) 
Let $\M$ be any matrix whose column space is equal to the span of $[\e, \A\e, \A^2\e, \ldots, \A^{s-1} \e]$. The function $\Sf(z)$ in \eqref{eq:stabilityfunction1} is $A$-stable if and only if there exists a matrix $\R\in\mathbb{S}^s$ such that
    \begin{align}\label{SW:AlgCondition}
        \begin{cases} 
            \R \, \e=\b \, , \\ 
            \X = \R \A + \A^T \R - \b \b^T \, ,\\
            \M^T \R \M \succeq 0 \, , \\
            \M^T \X \M \succeq 0 \, .         
        \end{cases}
    \end{align} 
\end{theorem}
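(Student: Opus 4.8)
The plan is to reduce the $A$-stability of $\Sf(z)$ to a frequency-domain positivity condition and then invoke the Kalman--Yakubovich--Popov (KYP) lemma, which is exactly the bridge that converts a frequency inequality into the existence of the matrix $\R$ and the linear matrix inequalities in \eqref{SW:AlgCondition}. First I would recall from \S\ref{subsec:linstability} that $A$-stability is equivalent to $|\Sf(z)| \leq 1$ on the imaginary axis together with analyticity of $\Sf$ in the left-half plane (via the maximum modulus principle, as already noted in the excerpt). Writing $\Sf(z) = 1 + z\b^T(\I - z\A)^{-1}\e$, the quantity $|\Sf(iy)|^2 \leq 1$ for all real $y$ is a scalar transfer-function positivity statement for the realization with system matrix $\A$, input/output vector $\e$, $\b$, and feedthrough term $1$. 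The key algebraic identity to establish is that $1 - |\Sf(iy)|^2 \geq 0$ can be written as a Hermitian form, and this is where the bounded-real / positive-real lemma machinery applies.

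The main steps, in order, are as follows. First, set up the state-space realization $(\A, \e, \b^T, 1)$ of $\Sf$ and express the deficiency $1 - \Sf(\overline{z})\Sf(z)$ on $\mathrm{Re}\,z = 0$ in terms of this realization. Second, apply the KYP lemma in the form that says: the frequency inequality holds for all $y$ if and only if there exists a symmetric $\R$ such that a certain $2\times 2$ block matrix built from $\R$, $\A$, $\e$, $\b$ is positive semidefinite; expanding the Schur-complement structure of that block matrix should produce precisely the two coupled relations $\R\e = \b$ and $\X = \R\A + \A^T\R - \b\b^T$ together with the semidefiniteness of $\R$ and $\X$. Third, and this is the genuinely delicate part, handle the \emph{degenerate} case where $N$ and $D$ share a common root, i.e.\ where the realization $(\A,\e)$ is not controllable. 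In that situation $\Sf$ has lower McMillan degree than $s$, the pair $(\A, \e)$ fails the controllability rank condition, and the standard KYP lemma (which presumes controllability or at least stabilizability) does not directly give semidefiniteness of the \emph{full} matrices $\R$ and $\X$. The resolution is to restrict all positivity claims to the controllable subspace, which is exactly $\mathrm{span}[\e, \A\e, \ldots, \A^{s-1}\e]$ --- the column space of $\M$. Projecting the block LMI onto this subspace yields $\M^T\R\M \succeq 0$ and $\M^T\X\M \succeq 0$ rather than $\R \succeq 0$ and $\X \succeq 0$, which is the whole point of Scherer--Wendler's generalization over the earlier Cooper and Scherer--T\"urke conditions.

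I expect the main obstacle to be precisely this degenerate/uncontrollable case and the careful bookkeeping of \emph{which} subspace the semidefiniteness must hold on. In the non-degenerate (controllable) setting the result is essentially a textbook application of the bounded-real lemma; the real content is showing that (i) the frequency condition only constrains $\R$ and $\X$ through their compression by $\M$, and (ii) conversely, given $\R$ satisfying the compressed inequalities together with the two linear equations, one can reconstruct the bound $|\Sf(iy)| \leq 1$ even when the realization is non-minimal. For the forward direction I would also need to verify that $\R\e = \b$ encodes the feedthrough-plus-output consistency condition $\Sf(0) = 1$ built into the order-zero behavior of the scheme, and that the identity $\X = \R\A + \A^T\R - \b\b^T$ is the Lyapunov-type residual whose compression controls the imaginary-axis modulus. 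Establishing the equivalence in both directions, with the subspace restriction made rigorous via the structure of $\M$, is where the argument earns its keep; the remaining manipulations are routine linear algebra.
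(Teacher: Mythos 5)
The paper does not actually prove this theorem: it is imported verbatim as Theorem~6.1 of Scherer--Wendler \cite{scherer_wendler:1994}, and the surrounding text only recounts its provenance (Cooper's quadratic-form factorization of the $E$-polynomial, Scherer--T\"urke's re-derivation via the KYP lemma, and Scherer--Wendler's extension to degenerate stability functions). Your outline follows exactly that historical KYP/bounded-real route, including the correct identification that the compression by $\M$ onto $\mathrm{span}[\e, \A\e, \ldots, \A^{s-1}\e]$ is what handles the non-controllable (degenerate) case, so in spirit you are reconstructing the original authors' argument rather than anything the present paper supplies.

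That said, as a proof your text is still a plan, and one step you defer is not routine: in the sufficiency direction you reduce $A$-stability to $|\Sf(iy)|\leq 1$ plus analyticity in $\mathbb{C}_-$ and invoke the maximum modulus principle, but the conditions \eqref{SW:AlgCondition} must themselves be shown to exclude poles of $\Sf$ from the closed left half-plane --- i.e.\ you need a Lyapunov-type argument showing that $\M^T\R\M \succeq 0$ together with $\M^T\X\M \succeq 0$ forces the relevant eigenvalues of $\A$ (equivalently the roots of $D(z)$) out of $S_{\pi/2}$ on the controllable subspace. Without that, the appeal to the maximum modulus principle is unsupported. A second point to pin down is the change of variable needed before the KYP lemma even applies: $\Sf(z) = 1 + z\b^T(\I - z\A)^{-1}\e$ is not in the standard form $D + C(s\I - A)^{-1}B$, and one must pass to $s = 1/z$ (which preserves the half-plane) to obtain the realization $(\A, \e, \b^T, 1)$; with feedthrough $D=1$ the $(2,2)$ block of the KYP inequality vanishes, and it is precisely this degeneracy that converts the off-diagonal block into the \emph{equality} $\R\e = \b$ rather than an inequality. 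Both of these are carried out in \cite{scherer_turke:1989, scherer_wendler:1994}; your sketch names the right tools but leaves the two places where the argument could actually fail unexecuted.
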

We refer to \eqref{SW:AlgCondition} as the CSTW conditions.

\begin{remark}
    When $\Sf(z)$ is nondegenerate, the matrix $\M$ can be the identity matrix $\I$.  If $\Sf(z)$ is degenerate, then \eqref{SW:AlgCondition} with $\M = \I$ provides a sufficient condition for $A$-stability, but it may not be necessary.  In the degenerate case, $\M$ may be chosen as $[\e, \A\e, \ldots, \A^{r-1} \e]$, where $r$ is the smallest number for which $\A^r \e$ can be expressed in terms of the vectors $[\e, \A\e, \ldots, \A^{r-1} \e]$. 
\end{remark}
\begin{remark}[Degenerate stability functions]
    While degenerate stability functions may seem uncommon, they do appear in the literature. Recently, Runge-Kutta schemes with degenerate stability functions have found application in avoiding order reduction via the weak stage order conditions (also referred to as parabolic order conditions) \cite{BiswasKetchesonSeiboldShirokoff2023, BiswasKetchesonSeiboldShirokoff2024, BiswasKetchesonRobertsSeiboldShirokoff2024}.
    \myremarkend    
\end{remark}

The conditions \eqref{SW:AlgCondition} define a convex set for $\R$ and are readily converted into an LMI by parameterizing the equality constraints. Let
\begin{align*}
    \B := \textrm{diag}\begin{bmatrix}
        b_1, & b_2, & \ldots, & b_s 
    \end{bmatrix} \, , 
\end{align*}
and
\begin{align*}
    \Rb_{ij} := \nb_{ij} \nb_{ij}^T 
    \qquad
    \textrm{where} 
    \qquad 
    \nb_{ij} = \e_i - \e_j \, , 
\end{align*}
for $1 \leq i <  j \leq s$ with $\e_i$ being the $i$th unit vector.  Then by construction, $\Rb_{ij}$ is a basis for the vector space $\Rb_{ij}\e = 0$ (cf. \cite{scherer_wendler:1994}), and $\R$ has the form:
\begin{align}\label{eq:basisR}
    \R&= \B + \Rb(\veta) \, \quad \textrm{where} \quad 
    \Rb(\veta) = \sum_{i=1}^{s-1}\sum_{j=i+1}^s\eta_{ij} \Rb_{ij} \, .
\end{align}

Condition \eqref{SW:AlgCondition} in LMI form reads:
\begin{align}\label{LMI:CSTW}
    \begin{bmatrix}
        \B & \mat{0} \\
        \mat{0} & \B\A + \A^T \B - \b \b^T
    \end{bmatrix} + 
    \sum_{i=1}^{s-1}\sum_{j=i+1}^s\eta_{ij} \begin{bmatrix}
        \Rb_{ij} & \mat{0} \\
        \mat{0} & \Rb_{ij}\A + \A^T \Rb_{ij}
    \end{bmatrix} \succeq  0  \, . 
\end{align}

The CSTW LMI \eqref{LMI:CSTW} and the $E$-polynomial approach in Lemma \ref{E_poly_lemma} both ensure $A$-stability for $\Sf(z)$. A key difference between the two approaches lies in the incorporation of the order conditions. Satisfying the order conditions removes lower order terms in the $E$-polynomial. In contrast, the CSTW LMI \eqref{LMI:CSTW} does not account for the order conditions, which turns out to be important in practice.

\subsection{Main Lemma: Sharpening the CSTW Conditions}\label{sec:ModCSTW}

The authors in \cite{scherer_wendler:1994} observe (though do not resolve) that zero eigenvalues of $\X$ may limit the practical application of the algebraic conditions \eqref{SW:AlgCondition}. The following Lemma shows that the order conditions result in $\X$ always admitting a family of zero eigenvalues whenever the CSTW LMI is feasible.

\begin{lemma}\label{thm:nullvec}
    Let $(\A,\b)$ satisfy the order conditions \eqref{eq:ordercondition} with order $p\geq2$. If $\R \in \mathbb{S}^s$ satisfies $\R \e = \b$ and $\X \succeq 0$ where
    \begin{align*}
        \X := \R\A+ \A^T \R - \b \b^T \, ,
    \end{align*}
    then $\X$ has the following null vectors:    
    \begin{align*}
        \X \A^{j-1} \e = 0 \qquad \textrm{for} \qquad 1 \leq j \leq \left\lfloor\frac{p}{2}\right\rfloor \, .
    \end{align*}    
\end{lemma}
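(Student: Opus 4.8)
The plan is to tie the matrix $\X$ to the stability function through a resolvent identity on the imaginary axis, and then read off the null vectors from the high-order vanishing of $1-|\Sf(iy)|^2$ near the origin that the order conditions force. Introduce the resolvent vector $\vec{g}(y) := (\I - iy\A)^{-1}\e$, which is analytic near $y=0$ with $\vec{g}(y) = \sum_{k\ge 0}(iy)^k\A^k\e$; thus $\A^{j-1}\e$ is (up to $i^{j-1}$) the coefficient of $y^{j-1}$. Using $(\I - iy\A)\vec{g}=\e$ — equivalently $\A\vec{g} = (iy)^{-1}(\vec{g}-\e)$ — together with $\R\e=\b$ and $\X = \R\A+\A^T\R-\b\b^T$, a direct computation of $\overline{\vec{g}}^{\,T}\X\vec{g}$ shows that on $z=iy$ the $\overline{\vec{g}}^{\,T}\R\vec{g}$ contributions cancel, leaving the Scherer--T{\"u}rke identity
\[ \overline{\vec{g}}^{\,T}\X\vec{g} \;=\; \frac{1-|\Sf(iy)|^2}{y^2}. \]
This reduction is routine bookkeeping once $\A\vec{g}$ is eliminated.

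Next I would invoke accuracy. Since the order conditions \eqref{eq:ordercondition} give $\Sf(z)=e^z+\BigO(z^{p+1})$, we have $|\Sf(iy)|^2=\Sf(iy)\Sf(-iy)=1+\BigO(y^{p+1})$, and because $1-|\Sf(iy)|^2$ is even in $y$ this sharpens to $1-|\Sf(iy)|^2=\BigO(y^{2\lfloor p/2\rfloor+2})$, in line with \eqref{EPoly_Asymptotics}. Combined with the identity, this gives $\overline{\vec{g}}^{\,T}\X\vec{g}=\BigO(y^{2n})$ with $n:=\lfloor p/2\rfloor$.

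The final step isolates the individual null vectors from this single scalar estimate. Splitting $\vec{g}=\u+i\vec{w}$ into real and imaginary parts (both real-analytic in $y$), symmetry of $\X$ cancels the cross terms, so $\overline{\vec{g}}^{\,T}\X\vec{g}=\|\X^{1/2}\u\|^2+\|\X^{1/2}\vec{w}\|^2$. Each summand is nonnegative and their sum is $\BigO(y^{2n})$, hence each is individually $\BigO(y^{2n})$. From $\u=\sum_{m\ge0}(-1)^m y^{2m}\A^{2m}\e$ and $\vec{w}=\sum_{m\ge0}(-1)^m y^{2m+1}\A^{2m+1}\e$, I would set $t=y^2$ and apply the elementary fact that a vector power series $\vec{U}(t)=\sum_m\vec{U}_m t^m$ with $\|\vec{U}(t)\|^2=\BigO(t^N)$ must have $\vec{U}_m=0$ whenever $2m<N$ (once $\vec{U}_0,\dots,\vec{U}_{m-1}$ vanish, the lowest surviving coefficient of $\|\vec{U}\|^2$ is $\|\vec{U}_m\|^2\,t^{2m}$). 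Applied to $\X^{1/2}\u$ (with $N=n$) this yields $\X^{1/2}\A^{2m}\e=0$ for $2m+1\le n$; applied to $\X^{1/2}\vec{w}=y\,\vec{V}(t)$, so that $\|\vec{V}\|^2=\BigO(t^{\,n-1})$, it yields $\X^{1/2}\A^{2m+1}\e=0$ for $2m+2\le n$. Since $\X^{1/2}\v=0$ implies $\X\v=0$, the even and odd powers together give exactly $\X\A^{j-1}\e=0$ for all $1\le j\le n=\lfloor p/2\rfloor$.

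The resolvent identity is the conceptual core but is computationally routine; the real work is the last step. The main obstacle there is the parity bookkeeping: one must track that the even powers $\A^{2m}\e$ arise from $\mathrm{Re}\,\vec{g}$ while the odd powers $\A^{2m+1}\e$ arise from $\mathrm{Im}\,\vec{g}$, and then verify that the two index bounds $2m+1\le n$ and $2m+2\le n$ interleave to cover every integer $j$ in $1\le j\le\lfloor p/2\rfloor$ with no gaps. A secondary point requiring care is the splitting step — that nonnegativity of each of $\|\X^{1/2}\u\|^2$ and $\|\X^{1/2}\vec{w}\|^2$, not merely of their sum, is what licenses passing the $\BigO(y^{2n})$ bound to each separately; this is where the full hypothesis $\X\succeq0$ is used, since the weaker CSTW condition $\M^T\X\M\succeq0$ would only yield $\M^T\X\A^{j-1}\e=0$.
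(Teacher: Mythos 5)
Your proof is correct, but it takes a genuinely different route from the paper's. The paper argues by strong induction: assuming $\X\A^k\e=0$ for $k<n$, it uses the recursion $\R\A^{k+1}\e=\b\b^T\A^k\e-\A^T\R\A^k\e$ to express $\R\A^n\e$ in the basis $\{(\A^T)^j\b\}_{j=0}^n$, and then the tall-tree identities $\b^T\A^j\e=\tfrac{1}{(j+1)!}$ for $j\le 2n+1$ collapse the quadratic form $(\A^n\e)^T\X(\A^n\e)$ to $\tfrac{(-1)^n}{(2n+2)!}(1-1)^{2n+2}=0$ via the binomial theorem, after which $\X\succeq0$ upgrades the vanishing of the quadratic form to a null vector. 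You instead work in the frequency domain: the Scherer--T\"urke resolvent identity ties $\overline{\vec{g}}^{\,T}\X\vec{g}$ to $(1-|\Sf(iy)|^2)/y^2$, the order conditions enter only through the single asymptotic statement $\Sf(z)=e^z+\BigO(z^{p+1})$, and all $\lfloor p/2\rfloor$ null vectors are extracted simultaneously from the real/imaginary splitting together with the lowest-coefficient argument for $\|\vec{U}(t)\|^2=\BigO(t^N)$. I checked the pieces: the identity holds (the $\overline{\vec{g}}^{\,T}\R\vec{g}$ contributions cancel because $(iy)^{-1}+(-iy)^{-1}=0$, with $\R\e=\b$ and symmetry of $\R$ supplying the boundary terms), the evenness argument correctly sharpens $\BigO(y^{p+1})$ to $\BigO(y^{2\lfloor p/2\rfloor+2})$, and the parity bookkeeping interleaves with no gaps ($j$ odd from $\mathrm{Re}\,\vec{g}$ and $j$ even from $\mathrm{Im}\,\vec{g}$, both constrained to $j\le\lfloor p/2\rfloor$). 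Both proofs use $\X\succeq0$ in an essential and analogous way, and your Hammer--Hollingsworth-style caveat about why $\M^T\X\M\succeq0$ would only give the weaker conclusion is apt. What your approach buys is conceptual clarity: it exhibits the null vectors of $\X$ as the CSTW-side manifestation of the $E$-polynomial's vanishing to order $2\lfloor p/2\rfloor+2$ at the origin, i.e., of \eqref{EPoly_Asymptotics}, which unifies the paper's two LMI formulations. What the paper's induction buys is self-containedness and an explicit closed form for $\R\A^n\e$, with no resolvent machinery and no need to track the removable singularity at $y=0$.
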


The proof utilizes the fact that for any matrix $\X \succeq 0$, 
\begin{align*}
    \textrm{if} \qquad \v^{\,T} \X \v = 0\,, \qquad \textrm{then} \qquad \X \v = 0 \, . 
\end{align*}
For instance, the Cholesky factorization $\X = \mat{Q}^T \mat{Q}$ shows that $\v^T \X \v = \| \mat{Q} \v \|^2 = 0$.

\begin{proof} To simplify notation, let 
\begin{align*}
    \v_j = \A^j \e \qquad j \geq 0 \, . 
\end{align*}
We show that for all 
\begin{align*}
    0 \leq n \leq \left\lfloor \frac{p}{2} \right\rfloor - 1 \, ,    
\end{align*}
the expression 
\begin{align*}
    \v_n^T \X \v_n = 0 \, . 
\end{align*}
Since $\X \succeq 0$, it follows that $\X \v_n = 0$.  

For $n=0$, the tall-tree conditions are $\b^T \e  = 1$ and $\b^T \v_1 = \tfrac{1}{2}$, hence:
\begin{align*}
    \v_0^T \X \v_0 &=  \b^T \v_1 + \v_1^T \b - (\e^T \b)^2 \\
    &= 0 \, . 
\end{align*}
We now proceed by strong induction: Let $n \leq \lfloor \tfrac{p}{2} \rfloor - 1$ be any positive integer, and assume that $\X \v_k = 0$ for all $0 \leq k \leq n - 1$. We show that $\v_n^T \X \v_n = 0$, which then completes the proof.

By hypothesis and the $p$th order tall-tree conditions, we have that 
\begin{align}\label{eq:talltreev2}
    \b^T \v_{j} = \frac{1}{(j+1)!} \qquad \textrm{for} \qquad j = 0, \ldots, 2n + 1  \, ,
\end{align}
as $2n + 1 \leq p - 1$ by the choice of $n$. 

The first step is to obtain an expression for $\R \v_n$.  Substituting the definition of $\X$ in terms of $\R$ into the induction hypothesis $\X \v_k = 0$, yields the recursion relation
\begin{equation}\label{eq:recursion}
    \R \v_{k+1} = \b \b^T \v_{k} - \A^{T} \R \v_{k} \,  \qquad \textrm{for} \qquad 0 \leq k \leq n - 1\, ,    
\end{equation}
so that $\R \v_{k+1}$ is written in terms of $\R \v_k$.  Setting $k = n - 1$ in \eqref{eq:recursion}, we can use the conditions \eqref{eq:talltreev2} on dot products $\b^T \v_k$ and iteratively eliminate $\R\v_j$ to express $\R \v_{n}$ in the basis $\{\b, \A^T \b , \ldots , (\A^T)^{n-1} \b\}$:
\begin{align}\label{eq:Rv_sum}
    \R \v_n &= \sum_{j = 0}^n \frac{(-1)^j}{(n - j)!} (\A^T)^{j} \, \b \, .
\end{align}
We then have:
\begin{alignat*}{2}
    \v_n^T \X \v_n &= \v_n^T \left(  \R\A + \A^T\R - \b \b^T  \right) \v_n \, ,  & & \\      %
      &= 2 \, \v_{n+1}^T \R \v_n  - \frac{1}{(n+1)!^2}   & \qquad &\textrm{(since } \A\v_n = \v_{n+1} \textrm{)} \, , \\
      &= 2 \, \sum_{j = 0}^n \frac{(-1)^j}{(n - j)! (n+j+2)!} - \frac{1}{(n+1)!^2}  \quad & &\textrm{(via \eqref{eq:talltreev2} and \eqref{eq:Rv_sum})} \, , \\ %
      &=\frac{(-1)^n}{(2n+2)!} \, \underbrace{\sum_{j=0}^{2n+2}
    \begin{pmatrix}
        2n+2\\
        j
    \end{pmatrix}      
    1^{2n+2-j}(-1)^j}_{=(1-1)^{2n+2}} \, ,  & & \\
    &= 0 \, . & & 
\end{alignat*}
\end{proof} 

\begin{remark} 
Lemma~\ref{thm:nullvec} can be viewed as a generalization that algebraically stable methods admit a set of null vectors in their algebraic stability matrix $\mat{B} \mat{A} + \mat{A}^T \mat{B} - \vec{b} \vec{b}^T$, For further reference, see the proof of \cite[Lemma 13.14]{HairerWanner1996}.\myremarkend
\end{remark}

The following theorem modifies the original CSTW conditions by including the null vectors of $\X$ as additional constraints. 

\begin{theorem}[Main Result, CSTW Conditions with order conditions]\label{Thm:CSTWModified} Let $\Sf(z)$ be a $p$th order approximation to $e^z$, i.e., $(\A, \b)$ satisfy \eqref{eq:ordercondition}, and let $\M$ be any matrix whose column space is equal to the span of $[\e, \A\e, \A^2\e, \ldots, \A^{s-1} \e]$.  Then $\Sf(z)$ is $A$-stable if and only if there exists a matrix $\R\in\mathbb{S}^s$ such that
    \begin{align}\label{CSTW:AlgCondition}
        \begin{cases} 
            \R \, \e = \b \, ,& \\ 
            \X = \R \A + \A^T \R - \b \b^T \, ,& \\
            \X \A^{j-1} \e = 0 \, & \qquad \textrm{for} \quad j = 1, \, \ldots, \, \left\lfloor \frac{p}{2} \right\rfloor \, ,\\ 
            \M^T \R \M \succeq 0 \, , & \\
            \M^T \X \M \succeq 0 \, ,&
        \end{cases}
    \end{align} 
\end{theorem}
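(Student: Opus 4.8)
The plan is to read this as an if-and-only-if whose content reduces, modulo Theorem~\ref{SW:1994}, to a single fact: under the order conditions \eqref{eq:ordercondition}, the extra constraints $\X\A^{j-1}\e=0$ are automatically carried by every $\R$ feasible for the original CSTW system \eqref{SW:AlgCondition}. Since \eqref{CSTW:AlgCondition} is obtained from \eqref{SW:AlgCondition} by appending exactly these equalities, the two feasibility problems then have the same solution set, and the characterization of $A$-stability is inherited from Theorem~\ref{SW:1994}. I would therefore organize the proof around the two implications and place all of the work in the necessity direction.

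Sufficiency is immediate and requires no calculation: any $\R$ solving \eqref{CSTW:AlgCondition} solves \eqref{SW:AlgCondition} verbatim, because deleting the lines $\X\A^{j-1}\e=0$ from \eqref{CSTW:AlgCondition} returns \eqref{SW:AlgCondition}. Theorem~\ref{SW:1994} then yields $A$-stability of $\Sf(z)$.

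For necessity I would begin from $A$-stability, apply Theorem~\ref{SW:1994} to obtain $\R\in\mathbb{S}^s$ with $\R\e=\b$, $\M^T\R\M\succeq0$, and $\M^T\X\M\succeq0$, and then argue that this same $\R$ satisfies the null-vector constraints. When the stability function is nondegenerate one may take $\M=\I$, so $\X\succeq0$ holds globally and Lemma~\ref{thm:nullvec} applies directly to deliver $\X\A^{j-1}\e=0$ for $1\le j\le\lfloor p/2\rfloor$. The substance, and the step I expect to be hardest, is the degenerate case, where \eqref{SW:AlgCondition} only supplies $\M^T\X\M\succeq0$ rather than the full $\X\succeq0$ demanded by Lemma~\ref{thm:nullvec}.

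To bridge this gap I would re-run the induction of Lemma~\ref{thm:nullvec} inside the Krylov space $\mathcal{V}:=\mathrm{col}(\M)=\mathrm{span}\{\e,\A\e,\ldots,\A^{s-1}\e\}$. The argument is viable because $\mathcal{V}$ is $\A$-invariant and each $\v_n:=\A^n\e$ lies in $\mathcal{V}$, so every inner product in the evaluation of $\v_n^T\X\v_n$ pairs two vectors of $\mathcal{V}$ and can be rewritten through the reduced data $\M^T\R\v_k$ together with the representation $\A\M=\M\mat{C}$ of $\A$ on $\mathcal{V}$. With the tall-tree identities \eqref{eq:talltreev2} feeding the recursion \eqref{eq:recursion}, the same binomial collapse $\sum_{j}\binom{2n+2}{j}(-1)^j=(1-1)^{2n+2}=0$ produces $\v_n^T\X\v_n=0$ using only $\M^T\X\v_k=0$ for $k<n$; positivity of $\M^T\X\M$ then upgrades this scalar identity to $\M^T\X\v_n=0$ and closes the induction. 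The remaining delicate point is promoting the subspace statement $\M^T\X\A^{j-1}\e=0$ to the full equality $\X\A^{j-1}\e=0$: since $\X(\mathcal{V})\not\subseteq\mathcal{V}$ in general, this is automatic only when $\M=\I$, and in the genuinely degenerate setting one must either exploit the freedom in $\R=\B+\Rb(\veta)$ to annihilate the $\mathcal{V}^\perp$-component or read the constraint relative to $\M$. I expect the binomial identity and the Krylov bookkeeping to be routine, with this PSD/projection gap being the true crux.
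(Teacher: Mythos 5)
Your proposal follows essentially the same route as the paper: the paper's entire proof consists of the observation that the induction in Lemma~\ref{thm:nullvec} survives when $\X\succeq0$ is replaced by $\M^T\X\M\succeq0$ for $\M$ spanning the Krylov space $[\e,\A\e,\ldots,\A^{s-1}\e]$ (precisely the restricted rerun you describe, relying on the $\A$-invariance of that space), together with the trivial sufficiency direction. The point you single out as the crux --- promoting $\M^T\X\A^{j-1}\e=0$ to the full equality $\X\A^{j-1}\e=0$ when $\M$ is not invertible --- is not addressed by the paper either, which simply asserts that any pair $(\R,\X)$ feasible for \eqref{SW:AlgCondition} also satisfies \eqref{CSTW:AlgCondition}; so your account matches the paper's argument and is, if anything, more explicit about where it is thinnest.
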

\begin{proof}
   Note that the proof of Lemma \ref{Thm:CSTWModified} holds if $\X\succeq0$ is replaced with $\M^T\X\M\succeq0$ for any matrix $\M$ whose columns span the null vectors $\A^{j-1}\e$ for $j\leq\left\lfloor\frac{p}{2}\right\rfloor$. By definition, the $\M$ appearing in Theorem \ref{Thm:CSTWModified}  contains all $\A^{j-1}\e$ for $j\leq\left\lfloor\frac{p}{2}\right\rfloor$. Therefore, any matrix pair $(\R,\X)$ satisfying the CSTW conditions \eqref{SW:AlgCondition} also satisfy \eqref{CSTW:AlgCondition}.
\end{proof}
Again, feasibility of \eqref{CSTW:AlgCondition} is a sufficient condition for $A$-stability when $\M = \I$.

An LMI for the modified CSTW conditions \eqref{CSTW:AlgCondition} can be obtained by parameterizing the $\eta_{ij}$ variables in \eqref{eq:basisR} to satisfy the additional affine constraints $\mat{X} \mat{A}^{j-1} \vec{e} = 0$ for $j=1,\ldots,\left\lfloor \frac{p}{2} \right\rfloor$. The parameterization is then substituted back into \eqref{LMI:CSTW}.

\subsection{Examples Highlighting Lemma~\ref{thm:nullvec}}\label{subsec:MainThmEx}
In this section, we provide examples highlighting the significance of Lemma~\ref{thm:nullvec}. The examples demonstrate that without incorporating the constraints imposed by the approximation property \eqref{eq:ordercondition}, computational approaches are unlikely to provide rigorous certifications for $A$-stability. 

Given a fixed pair $(\A,\b)$, the following convex set is introduced
\begin{align*}
    \mathcal{R}(\A,\b) := \big\{ \veta \in \mathbb{R}^{s(s-1)/2} \; : \;  \textrm{The LMI} \; \eqref{LMI:CSTW} \; \textrm{holds} \big\} \, ,  
\end{align*}
The (original, unmodified) CSTW conditions in Theorem~\ref{SW:1994} are equivalently rephrased as: A scheme $(\A,\b)$ is $A$-stable if and only if $\mathcal{R}(\A,\b)$ is non-empty.

It might be expected that the dimension of the convex set $\mathcal{R}$ is $s(s-1)/2$ (i.e., the dimension of symmetric matrices minus the constraints imposed by $\R\e = \b$). However, Lemma~\ref{thm:nullvec} indicates that the inequality constraint $\X \succeq 0$ (or $\M^T \X \M \succeq 0$) further reduces the dimension of $\mathcal{R}$ to be strictly smaller. Consequently, computational approaches that seek $\veta \in \mathcal{R}$ will essentially ``never'' find feasible points without correctly characterizing the low dimensional space $\mathcal{R}$ --- which is provided by Theorem~\ref{Thm:CSTWModified}.

The first example demonstrates how the zero eigenvalues of $\X$ reduce the dimension of the convex set $\mathcal{R}$.  The second example shows why $\X \succeq 0$ is a necessary hypothesis in the Lemma. 

\begin{example}[SDIRK(3,2)]\label{Example:DIRK3_2} This example constructs the set $\mathcal{R}(\A,\b)$ for the following SDIRK 3-stage $p=2$ method 
\[
\renewcommand\arraystretch{1.2}
\begin{array}{c|c}
            \vec{c} & \A\\ \hline
            & \b^T
        \end{array} = 
\begin{array}
{c|ccccc}
1 & 1 & 0 & 0 \\
\frac{3}{2} & \frac{1}{2} & 1 &0\\
1 & 1 & -1 & 1\\
\hline 
& 1 & -1 & 1
\end{array} \, . 
\]
The set $\mathcal{R}(\A,\b)$ is defined as $\veta = \left[ \eta_{12} , \, \eta_{13} , \, \eta_{23} \right]^T$ for which $\R(\veta) \succeq 0 $ and $\X(\veta) \succeq 0$ where
\begin{align*}
    \R(\veta) &= \B + \eta_{12} \, \Rb_{12} + \eta_{13}\, \Rb_{13} + \eta_{23}\, \Rb_{23} \, , \\   
    \X(\veta) &= (\B\A + \A^T\B - \b\b) + \eta_{12} \, (\Rb_{12}\A + \A^T \Rb_{12})  
    + \eta_{13}\, (\Rb_{13}\A + \A^T \Rb_{13}) \\ 
    &\quad+ \eta_{23}\, (\Rb_{23}\A + \A^T \Rb_{13}) \, ,
\end{align*}
and $\B = \mathrm{diag}( 1, \; -1, \; 1)$.  Given that $\e, \A\e$, and $\A^2 \e$ are linearly independent, the matrix $\M$ in the CSTW conditions is set to the identity matrix. 

The CSTW conditions suggest the dimension of $\mathcal{R}(\A,\b)$ is $3$. However, the implication of Theorem~\ref{thm:nullvec} is that the dimension is in fact $1$. Since $p = 2$, Theorem~\ref{thm:nullvec} allows for the inclusion of the constraint $\X \e = 0$ within $\mathcal{R}$, i.e.,
\begin{align*}
    \mathcal{R}(\A,\b) = \big\{ \veta \in \mathbb{R}^3 \; : \; 
    \R(\veta) \succeq 0 \, , \; \X(\veta ) \succeq 0 \;, \X \e = 0 \big\} \, . 
\end{align*}
The constraint $\X \e = 0$ imposes two independent linear equations on $\veta$ whose solution forces $\eta_{12} = 3$ and $\eta_{32} = 2$. Thus,
\begin{align*}
    \mathcal{R}(\A,\b) &=  \big\{ \veta \in \mathbb{R}^3 \; : \; \eta_{12} = 3 \, ,  \; \; \eta_{32} = 2 \, , \; \; 
    \R(\eta_{13}) \succeq 0 \, , \; \X(\eta_{13} ) \succeq 0  \big\} \, ,
\end{align*}
where
\begin{align*}
    \R(\eta_{13} ) &= \begin{bmatrix}
    \phantom{-}4 & -3 & \phantom{-}0 \\
    -3 & \phantom{-}4 & -2 \\
    \phantom{-}0 & -2 & \phantom{-}3
\end{bmatrix} + \eta_{13} \begin{bmatrix}
        \phantom{-}1 & \phantom{-}0 & -1 \\
        \phantom{-}0 & \phantom{-}0 & \phantom{-}0 \\
        -1 & \phantom{-}0 & \phantom{-}1 
    \end{bmatrix} \, , \\
    \X(\eta_{13} ) &= \begin{bmatrix}
        \phantom{-}4 & -5 & \phantom{-}1 \\
        -5 & \phantom{-}11 & -6 \\
        \phantom{-}1 & -6 & \phantom{-}5
    \end{bmatrix} + \eta_{13} 
    \begin{bmatrix}
        \phantom{-}0 & \phantom{-}1 & - 1\\
        \phantom{-}1 & \phantom{-}0 & -1\\
        -1 & -1 & \phantom{-}2
    \end{bmatrix} \, .  
\end{align*}
Figure~\ref{fig:my_label} visualizes non-empty $1$-dimensional set $\mathcal{R}$, which notably has empty interior.

\begin{figure}[htbp]
\centering
\includegraphics[width=.75\linewidth]{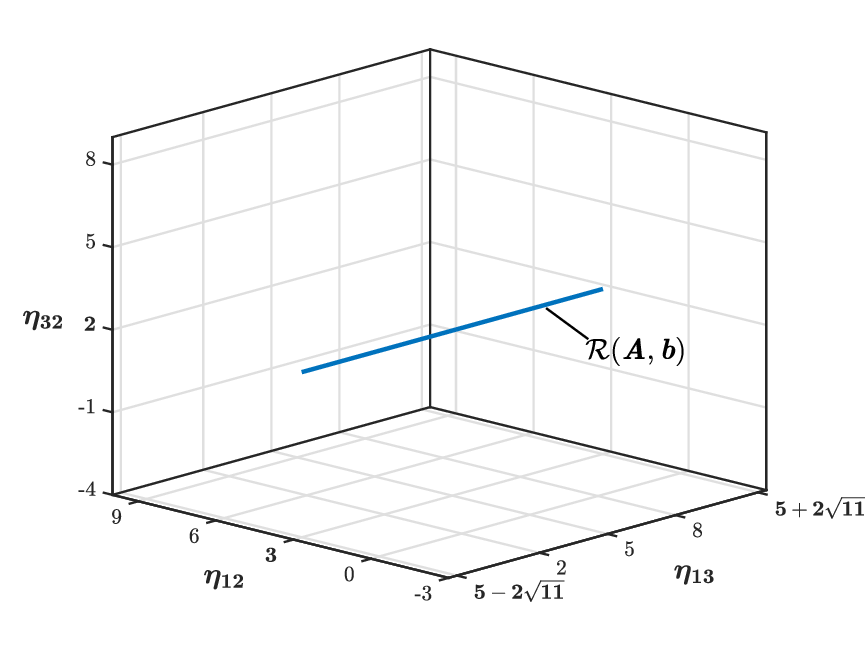}
\caption{The blue line is a visualization of $\mathcal{R}(\A,\b)$ for SDIRK(3,2) in Example~\ref{Example:DIRK3_2}. Note the set $\mathcal{R}$ has dimension $1$, as characterized by Theorem~\ref{Thm:CSTWModified}, which is lower than the expected $3$ dimensional set suggested by Theorem~\ref{SW:1994}. }
\label{fig:my_label}
\end{figure}
\end{example}

\begin{example}[Hammer \& Hollingsworth]\label{Example:HH_2_4}  This example demonstrates why $\X \succeq 0$ is required as a hypothesis in Theorem~\ref{thm:nullvec} for $\X$ to have zero eigenvalues. The Hammer \& Hollingsworth two stage $p = 4$ method \cite[Table II.7.3]{HairerNorsettWanner1993} is represented as
\[
\renewcommand\arraystretch{1.2}
\begin{array}{c|c}
            \vec{c} & \A\\ \hline
            & \b^T
        \end{array} = 
\begin{array}
{c|cc}
  \tfrac{1}{2} - \tfrac{\sqrt{3}}{6}  & \tfrac{1}{4} & \tfrac{1}{4}-\tfrac{\sqrt{3}}{6} \\
 \tfrac{1}{2}+\tfrac{\sqrt{3}}{6} & \tfrac{1}{4}+\tfrac{\sqrt{3}}{6} & \tfrac{1}{4} \\
\hline 
 & \tfrac{1}{2} & \tfrac{1}{2}
\end{array} \, . 
\]
The scheme yields the following stability function and $E$-polynomial
\begin{align*}
    \Sf(z) = \frac{1 + z/2 + z^2/12}{1 - z/2 + z^2/12} \,\qquad \textrm{and} \qquad  \quad E(y) = 0  \, .
\end{align*}
Since $E(y) \geq 0$, the scheme is $A$-stable and $\mathcal{R}(\A,\b)$ is not empty. 

The set $\mathcal{R}(\A,\b)$ is characterized by $\eta \in \mathbb{R}$ via  
\begin{align*}    
    \R(\eta) &= \frac{1}{2} \I + \eta \begin{pmatrix}
        \phantom{-}1 \\
        -1
    \end{pmatrix}\ 
    \begin{pmatrix}
        1 & -1
    \end{pmatrix} \,,  \qquad 
     \X(\eta) = -\frac{\sqrt{3}}{2} \eta \begin{pmatrix}
         1 & 0 \\
         0 & -1
     \end{pmatrix} \, . 
\end{align*}
The only value of $\eta$ for which $\X(\eta) \succeq 0, \R(\eta) \succeq 0$ is $\eta = 0$, i.e., $\mathcal{R}(\A,\b) = \{0\}$. 

Note that $\X \succeq 0$ is required for $\X$ to have a non-trivial null space.  For any value of $\eta \neq 0$, the matrix $\X$ is invertible and has no null vectors, even though $(\A,\b)$ satisfies the order conditions.  When $\X \succeq 0$, it has two linear independent null vectors predicted by the Theorem~\ref{thm:nullvec}: $\{ \e, \A \e\}$ --- and thus must be the zero matrix (i.e., $\eta = 0$). 
\end{example}

\section{Examples and Results for Runge-Kutta Schemes}\label{sec:Examples}
To provide examples of using LMIs to certify the stability of RK methods, we start with an overview in Subsection~\ref{subsec:numericalDetails} that details the general numerical approach. We then apply the approach to an idealized example, the $A$-stable SDIRK(5,4) scheme, in Subsection~\ref{ex:st_dirk4}. Additionally, we examine $A$-stability for several recently devised schemes developed via numerical software \cite{AlamriKetcheson2024, BiswasKetchesonSeiboldShirokoff2023}. These schemes do not satisfy the tall-tree order conditions exactly but admit a residual of typical size $\mathcal{O}(10^{-15})$. The section concludes with examples establishing rigorous bounds on $\alpha$ for $A(\alpha)$-stability.

\subsection{Computational Details for Rigorous Certification}\label{subsec:numericalDetails}
We describe computational details for rigorously verifying stability via feasibility of an LMI, proceeding in two steps. 

First, we use CVX, a package for specifying and solving convex programs \cite{cvx, gb08}, to numerically solve one of the following semi-definite programming problems:
\begin{itemize}
    \item To demonstrate the non-negativity of the $E$-polynomial from \S~\ref{subsec:EpolyLMI}, i.e., assess the feasibility of \eqref{Eq:SOS_LMI}, we solve
\begin{opt}\label{opt:SDP_Epoly}
    \textrm{Minimize: }&  1 \\
    \textrm{Subject to: }&  \F(\vec{\eta}) = \P + \sum_{j=1}^d \eta_j \, \N_j \succeq 0 \, ,                  
\end{opt}
\item To assess the feasibility of the modified CSTW approach \eqref{CSTW:AlgCondition}, we solve
\begin{opt}\label{opt:SDP_CSTW}
    \textrm{Minimize: }&  1 \\
    \textrm{Subject to: }&  \R \, \e = \b \, , \\ 
                         & \X = \R \A + \A^T \R - \b \b^T \, , \\
                         & \X \A^{j-1} \e = 0 \, ,   \qquad\qquad\qquad \textrm{for} \quad j = 1, \, \ldots, \, \left\lfloor \frac{p}{2} \right\rfloor \, ,\\
                         & \M^T \R \M \succeq 0 \, , \\
                         & \M^T \X \M \succeq 0 \, , 
\end{opt}where $\M$ is any matrix containing the columns where $r$ is the largest integer for which $\A^{r}\e$ can be written as a linear combination of $[\vec{e} , \; \A \e , \; \ldots, \A^{r-1} \e]$.
From an implementation standpoint, we convert the constraints in \eqref{opt:SDP_CSTW} (as discussed in \S~\ref{subsec:Convex_Opt}) into an LMI. 
\end{itemize}
While not a proof, a positive output of CVX provides numerical evidence that the convex set in question is feasible. 

Second, we use the output of CVX to construct a rigorous certificate of stability in settings where $\mat{P} \in \mathbb{Q}^{m \times m}$, or $\A \in \mathbb{Q}^{s \times s}, \b \in \mathbb{Q}^s$.  We convert the double-precision floating-point outputs of CVX into symbolic rational entries and perform exact symbolic LDL factorizations for $\F$ in \eqref{opt:SDP_Epoly} or $\R, \X$ in \eqref{opt:SDP_CSTW} to yield matrices $\L, \D$ with rational entries. A rigorous certificate is then ensured provided $\D \succeq 0$. 

For example, in the case of \eqref{opt:SDP_Epoly}, CVX yields a set of $\eta_j^* \in \mathbb{Q}$ for $(1 \leq j \leq d)$; substituting $\{\eta_j^*\}_{j=1}^d$ back into the LMI and symbolically computing an LDL factorization yields
\begin{align*}
    \F(\vec{\eta}^*) = \L \D \L^T   \qquad \textrm{with} \qquad \L, \D \in \mathbb{Q}^{m \times m} \, .
\end{align*}
For presentation and simplicity, we often round the output values of CVX $\eta_j^*$ to nearby $\bar{\eta}_j \in \mathbb{Q}$ with integer numerator and denominators having fewer digits, yet still yielding positive certificates of feasibility.

\begin{remark}
    Throughout this section, we present coefficient matrices wherever possible. However, in several of the practical examples, the coefficients of various matrices, e.g., $\P$, $\X$, etc., as well as the LDL factorizations of $\F$ or $\X, \R$ admit rational values exceeding 100 digits. \myremarkend
\end{remark}

\begin{remark}
    Note that if the coefficients of $\P$ or $\A,\b$ lie in a field extension $\mathbb{F}$ of $\mathbb{Q}$, the LDL factorization also admits matrices $\L, \D$ in $\mathbb{F}$. Therefore, this approach generalizes in a straightforward way as long as one can determine the sign of any element $x \in \mathbb{F}$, such as with interval arithmetic.  In addition, it is worth noting that schemes $\A, \b$ with irrational entries may have $E$-polynomials with rational entries and coefficient matrices $\P$, allowing this approach to apply without any further modification (e.g., the scheme \eqref{RK_ramos_vigo2007} has coefficients in $\mathbb{Q}[\sqrt{2}]$ but $E$-polynomial with coefficients in $\mathbb{Q}$) \myremarkend.
\end{remark}

\subsection{Example of $A$-stability in SDIRK(5,4)}\label{ex:st_dirk4}
The first example demonstrates the hybrid computational-analytic LMI solution approach to verify $A$-stability for an idealized example that satisfies the tall tree order conditions exactly and is known to be $A$-stable. The scheme is a Diagonally Implicit RK method, represented by the following Butcher tableau \cite[Table~6.5, Chapter IV.6]{HairerWanner1996}.
\begin{align}\label{DIRK5_4}
\renewcommand\arraystretch{1.2}
\begin{array}{c|c}
            \vec{c} & \A \\ \hline
            & \b^T
        \end{array}\;
        =
\begin{array}
{c|ccccc}
\frac{1}{2} & \frac{1}{4} & & & &\\
\frac{3}{4} & \frac{1}{2} & \frac{1}{4} & & &\\
\frac{11}{20} & \frac{17}{50} & -\frac{1}{25} & \frac{1}{4} & &\\
\frac{1}{2} & \frac{371}{1360} & -\frac{137}{2720} & \frac{15}{544} & \frac{1}{4} &\\
1 & \frac{25}{24} & -\frac{49}{48} & \frac{125}{16} & -\frac{85}{12} & \frac{1}{4} \\
\hline 
& \frac{25}{24} & -\frac{49}{48} & \frac{125}{16} & -\frac{85}{12} & \frac{1}{4}
\end{array} \, .
\end{align}
We provide two proofs: one using the $E$-polynomial LMI and a second based on modified CSTW LMI conditions. 

\subsubsection{E-ploynomial linear matrix inequality for SDIRK(5,4)}
The SDIRK(5,4) scheme in \eqref{DIRK5_4} has an $E$-polynomial $E(y)=y^6(9y^{4}-64y^2+512)$ ($\alpha = \frac{\pi}{2}$), which after factoring out the largest monomial factor, yields
\begin{align*}
        F(y) := 9y^{4}-64y^2+512 = \vec{y}^T \, 
            \F(\eta) \, \vec{y} \, , 
\end{align*}
where 
\begin{align}\label{Eq:DIRK54_F}
    \F(\eta) = 
        \begin{bmatrix}
                512 & 0 & 0\\
                0&  -64 & 0\\
                0& 0 & 9  
            \end{bmatrix}
            +
            \eta \begin{bmatrix}
               0 & 0 & 1 \\
               0 & -2  & 0\\
                1 & 0 & 0
            \end{bmatrix} \, .
\end{align}

For this example, the space $\mathcal{N}_3$ (introduced in \S~\ref{subsec:BackgrounPolyLMI}) has dimension $d = 1$ and is spanned by the second matrix in \eqref{Eq:DIRK54_F}. The numerical solution \eqref{opt:SDP_Epoly} for $\F(\eta)$ yields the following output:
\begin{align*} 
    \eta^* = -61.786375823904734 \, . 
\end{align*}
The fact that CVX obtains a solution is numerical evidence suggesting $E(y) \geq 0$, indicating that the scheme is $A$-stable.

We can turn this numerical evidence into a rigorous proof by substituting rational values of $\eta$ close to this computational output and factorizing exactly $\F(\eta) = \L\D\L^T$ with $\L, \D \in \mathbb{Q}^{3\times 3}$. While there is no requirement to substitute integer values of $\eta$, for the sake of presentation, we round the CVX output $\eta^*$ to $\overline{\eta} = -60$ and obtain:
\begin{align*}
    \F(-60) = \begin{bmatrix}
        512 & 0 & -60 \\
           0 & 56 & 0 \\
            -60 & 0 & 9
    \end{bmatrix} = \L \, \D \, \L^T \, , 
\end{align*}
where 
\begin{align*}    
    \L=
    \begin{bmatrix}
            1 & 0 & 0\\[2pt]
            0 & 1 & 0\\[2pt]
            \text{-}\frac{15}{128} & 0 & 1
    \end{bmatrix} \, ,
    \qquad \textrm{and} \qquad 
    \D=
    \begin{bmatrix}
        512 & 0 & 0\\[2pt]
        0 & 56 & 0\\[2pt]
        0 & 0 & \frac{63}{32}
    \end{bmatrix} \succeq 0 \, . 
\end{align*}
This results in the following SOS representation of $E(y)$, certifying $A$-stability:
\begin{align*}
    E(y) = y^6 \, \left( 512\left(\frac{15}{128}y^2-1\right)^2 + 56y^2+ \frac{63}{32}y^4 \right) \, . 
\end{align*}

\begin{remark}
    Since the linear constraints are exactly parameterized, there is robustness in the choice of $\eta$. For example, the value of $\bar{\eta} = -32$ similarly yields a certificate of $A$-stability since $\F(-32) =  \L \, \D \, \L^T \succeq 0$, where
\begin{align*}    
    \L &= \left[\begin{array}{ccc} 1 & 0 & 0\\ 0 & 1 & 0\\ \text{-}\frac{1}{16} & 0 & 1 \end{array}\right] \, , \qquad 
    \textrm{and} \qquad  
    \D = \left[\begin{array}{ccc} 512 & 0 & 0\\ 0 & 0 & 0\\ 0 & 0 & 7 \end{array}\right] \succeq 0 \, . 
\end{align*}
    \myremarkend
\end{remark}

\subsubsection{The Modified CSTW-Method for SDIRK(5,4)}
Using the SDIRK(5,4) scheme in \eqref{DIRK5_4} as an example, this section highlights several key differences in the modified CSTW approach relative to the $E$-polynomial.

Since $p = 4$, Theorem \ref{thm:nullvec} adds two linear constraints on $\X$ (corresponding to null vectors $\e$ and $\A\e$) into the associated LMI. Consequently, problem \eqref{opt:SDP_CSTW} has three degrees of freedom, denoted by the vector $\vec{\eta}$. The rounded numerical solution of \eqref{opt:SDP_CSTW} yields matrices

\[
\renewcommand\arraystretch{1.5}
\setlength\arraycolsep{2pt}
\begin{aligned}
\X^*&=\left[\begin{array}{ccccc} \frac{729823}{97920} & \text{-}\frac{348733}{195840} & \frac{875727}{21760} & \text{-}\frac{334871}{7200} & \frac{237}{400}\\ \text{-}\frac{348733}{195840} & \frac{170083}{391680} & \text{-}\frac{1259867}{130560} & \frac{160397}{14400} & \text{-}\frac{57}{400}\\ \frac{875727}{21760} & \text{-}\frac{1259867}{130560} & \frac{5678645}{26112} & \text{-}\frac{241217}{960} & \frac{16}{5}\\ \text{-}\frac{334871}{7200} & \frac{160397}{14400} & \text{-}\frac{241217}{960} & \frac{1045211}{3600} & \text{-}\frac{1479}{400}\\ \frac{237}{400} & \text{-}\frac{57}{400} & \frac{16}{5} & \text{-}\frac{1479}{400} & \frac{19}{400} \end{array}\right]
\\
\end{aligned}
\quad \textrm{and}\quad
\begin{aligned}
\R^*&=\left[\begin{array}{ccccc} \frac{195061}{16320} & \text{-}\frac{42157}{10880} & \frac{416905}{6528} & -72 & \frac{11}{10}\\ \text{-}\frac{42157}{10880} & \frac{131641}{65280} & \text{-}\frac{324335}{13056} & \frac{1259}{48} & \text{-}\frac{11}{20}\\ \frac{416905}{6528} & \text{-}\frac{324335}{13056} & \frac{4888637}{13056} & \text{-}\frac{99107}{240} & \frac{73}{10}\\ -72 & \frac{1259}{48} & \text{-}\frac{99107}{240} & \frac{34459}{75} & \text{-}\frac{391}{50}\\ \frac{11}{10} & \text{-}\frac{11}{20} & \frac{73}{10} & \text{-}\frac{391}{50} & \frac{11}{50} \end{array}\right],
\end{aligned}
\]
which then admits LDL factorizations of the form 
\begin{align*}
    \X^* =\L_X\D_X\L_X^T\,, \qquad\qquad 
    \R^* = \L_R\D_R\L_R^T \, ,
\end{align*}
where the matrices have coefficients in $\mathbb{Q}$ with 
\[
\renewcommand\arraystretch{1.5}
\setlength\arraycolsep{2pt}
\begin{aligned}
\D_X=\operatorname{diag}\left[\begin{array}{c}
\frac{729823}{97920} \\
\frac{2466451}{280252032} \\
\frac{7352143}{246645100} \\
0  \\
0
\end{array}\right]\,,
\qquad 
\D_R=\operatorname{diag}\left[\begin{array}{c} 
\frac{195061}{16320}\\
\frac{28479739}{37451712} \\
\frac{3647946461}{341756868} \\
\frac{3800443925}{43775357532}\\
\frac{103805}{2104052} 
\end{array}\right] \, .
\end{aligned}
\]

The fact that $\D_X$ admits two zero eigenvalues is by construction and follows from Theorem~\ref{Thm:CSTWModified}. The certification of $A$-stability is thus confirmed as the pair $\X^*, \R^*$ satisfy the CSTW condition in exact arithmetic.

\subsection{Schemes Failing to Satisfy The Tall-Tree Order Conditions}
Building on the idealized example from the previous subsection, we now focus on certifying stability for RK schemes developed from numerical solutions of the order conditions (i.e., using numerical optimization software). The schemes considered here have coefficients that do not exactly satisfy the order conditions \eqref{eq:ordercondition}. Instead, the coefficients satisfy these conditions with a small residual. Failing to satisfy the tall-tree order conditions leads to two complications: the $E$-polynomial no longer satisfies \eqref{EPoly_Asymptotics} but instead includes low-degree monomials with small coefficients; additionally, the matrix $\X$, used in the CSTW conditions, no longer admits exact zero eigenvalues, but rather small nonzero ones. 

We test schemes from two sources:
\begin{itemize}
    \item Diagonally implicit Runge-Kutta schemes with weak stage order \cite{BiswasKetchesonSeiboldShirokoff2023} (cf. \cite{BiswasKetchesonSeiboldShirokoff2024, BiswasKetchesonRobertsSeiboldShirokoff2024}). These schemes were developed to alleviate the effects of \emph{order reduction} on stiff problems, primarily arising from spatial discretizations of linear partial differential equations. The schemes are denoted as WSO DIRK(s,p,q), where $s$, $p$, and $q$ denote the number of stages, classical order, and weak stage order, respectively.
    \item (Very high order) Diagonally implicit Runge-Kutta schemes with additional practical properties, developed in \cite{AlamriKetcheson2024}. 
\end{itemize}
It is worth noting that while both \cite{BiswasKetchesonSeiboldShirokoff2023, AlamriKetcheson2024} provide strong numerical evidence for $A$-stability (e.g., a solution with small residual was provided for the CSTW LMI in \cite{AlamriKetcheson2024}) neither work provides a rigorous certificate in the form of an exact solution to one of the LMI's. 

We adopt two strategies for testing $A$-stability, outlined as follows:

\medskip
\noindent
{\bf Strategy 1:}
The Butcher coefficients $(\A, \b)$ are reported as decimal expansions, typically to 16 digits of accuracy (e.g., the accuracy of double-precision floating-point arithmetic from which they were obtained). Treating the coefficients as rational values, we test for the non-negativity of the associated $E$-polynomial, which also has rational coefficients. This strategy determines whether the dynamics \eqref{eq:RK_methods} are $A$-stable despite the coefficients not exactly satisfying the tall-tree order conditions.  

In contrast to the $E$-polynomial approach, challenges arise when using the CSTW  approach to obtain a rigorous certificate of $A$-stability. The modified CSTW conditions depend on the exact satisfaction of the tall tree order conditions. However, in cases where the Butcher coefficients approximate these conditions with a small residual, the null vectors of $\mat{X}$ are no longer applicable constraints in the SDP, and the original CSTW conditions must be used. The feasible set of the CSTW conditions may then take the form of a tubular domain of a low-dimensional set, thereby introducing computational challenges to the numerical solution.

\medskip
\noindent
{\bf Strategy 2:} To overcome the challenges in Strategy 1 due to approximations in the tall tree order conditions, we assess the $A$-stability of perturbed schemes $(\mat{\tilde{A}}, \vec{\tilde{b}})$ that simultaneously:
\begin{itemize}
    \item Have coefficients in $\mathbb{Q}$ and satisfy the tall-tree order conditions \eqref{eq:ordercondition} in exact arithmetic; 
    \item Are perturbations of the reported scheme $(\A, \b)$ in the literature, satisfying an error bound
    \begin{align}\label{Eq:PertSchemeErrorEst}
        |b_i - \tilde{b}_i| < \epsilon_b \quad \textrm{and} \quad
        |a_{ij} - {\tilde{a}}_{ij} | < \epsilon_A \quad \textrm{for} \quad i,j=1,\ldots,s \, . 
    \end{align}    
\end{itemize}
Since the perturbed schemes, $(\mat{\tilde{A}}, \vec{\tilde{b}})$, satisfy the tall-tree order conditions, their $E$-polynomials satisfy \eqref{EPoly_Asymptotics} and the associated $\X$ matrix in the CSTW approach admits null vectors (in exact arithmetic) characterized by Theorem~\ref{thm:nullvec}. Thus, both the $E$-polynomial and CSTW approaches provide pathways for certifying $A$-stability for $(\mat{\tilde{A}}, \vec{\tilde{b}})$. 

Several schemes that fail to achieve rigorous certificates of $A$-stability under Strategy 1 are shown to be $(\epsilon_A,\epsilon_b)$-close to schemes that attain rigorous $A$-stability certificates under Strategy 2.

\subsubsection{Certification of $A$-stability via Strategy 1}
Here, we verify $A$-stability of three schemes using Strategy 1.  Since the schemes do not satisfy the order conditions exactly, the $E$-polynomial ($\alpha = \frac{\pi}{2}$) has the form:
\begin{align*}
    {E}(y)=y^2\vec{y}^T \mat{F}(\eta) \vec{y}, \qquad \textrm{with} \qquad \mat{F}(\eta) = \mat{P} + \sum_{j=1}^{d} \eta_{j} \Nb_j \,, 
\end{align*}
where $\mat{P}$ and $\mat{N}_j$ are as in \S~\ref{subsec:BackgrounPolyLMI}. The details for each scheme are as follows:

\begin{enumerate}
    \item WSO DIRK(12,5,4) developed in \cite{BiswasKetchesonSeiboldShirokoff2023}: A symbolic computation of $E(y)$ yields a diagonal matrix $\P$, containing the coefficients of $E(y)$. Since $\P \succeq 0$, it follows that $\mat{F}(\eta) \succeq 0$ trivially when $\veta = 0$. Consequently, no SDP is required, as the LMI is immediately satisfied.
    \item WSO DIRK(7,4,4), also developed in \cite{BiswasKetchesonSeiboldShirokoff2023}: A symbolic computation of ${E}(y)=y^2\vec{y}^T\P\vec{y}$, yields a diagonal matrix $\P \in \mathbb{Q}^{7 \times 7}$ with two negative coefficients
        \begin{align*}
            p_6 < 0\,, \qquad p_8 <  0 \, .
        \end{align*} 
        Since $\mat{F}(0)$ does not satisfy the LMI, we seek solutions to the LMI (which has dimension $d = 15$) via an SDP. The $E$-polynomial SDP identifies a candidate feasible solution $\vec{\eta^*}$, which for computational simplicity, we round to $\vec{\bar{\eta}}$, resulting in the matrix:
        \begin{align*}
            \F(\vec{\bar{\eta}})=\P-108420\Nb_{10}+20\Nb_{12}-3420\Nb_{13}-30\Nb_{15}=\L\D\L^T , 
        \end{align*}
    with $\L, \D \in \mathbb{Q}^{7 \times 7}$ and $\D\succ0$.
    \item DIRK(13,8)[1]A[(14,6)A] developed in \cite{AlamriKetcheson2024}: Similar to the previous case, symbolic computation of the $E$-polynomial yields $\P \in \mathbb{Q}^{13\times 13}$ with two negative coefficients
        \begin{align*}
            p_{14} < 0\,, \qquad p_{22} < 0 \, , 
        \end{align*}
        requiring an SDP to find a potential sum of squares representation for $E(y)$. The $E$-polynomial LMI has dimension $d = 66$, and the corresponding SDP identifies a feasible solution $\vec{\eta^*}$, which upon rounding yields $\vec{\bar{\eta}}$ (reported in Appendix~\ref{Appendix:DIRK13_8}), resulting in the matrix $\F(\vec{\bar{\eta}})=\L\D\L^T$, where the diagonal matrix $\D \succ 0$. 
\end{enumerate}
The complete set of coefficients for $\vec{\bar{\eta}}, \mat{L}, \mat{D}$, as well as generating Matlab code, can be found in the supplemental material.

The results are formalized with the following Lemma.
\begin{lemma}
    The $E$-polynomials for the weak stage order schemes \textup{DIRK(12,5,4)} and \textup{DIRK(7,4,4)} in \cite{BiswasKetchesonSeiboldShirokoff2023}, and \textup{DIRK(13,8)[1]A[(14,6)A]} in \cite{AlamriKetcheson2024} are non-negative, and the associated schemes are $A$-stable. 
\end{lemma}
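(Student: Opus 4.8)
The plan is to treat this as a computer-assisted verification built directly on Lemma~\ref{E_poly_lemma}: for each of the three schemes I will exhibit an explicit sum-of-squares certificate for the associated $E$-polynomial, which establishes both its non-negativity and, together with the analyticity hypothesis, $A$-stability. First I would dispatch the analyticity condition (item 1 of Lemma~\ref{E_poly_lemma}) uniformly. Each scheme is diagonally implicit, so $\A$ is lower triangular with positive diagonal; its eigenvalues are therefore positive reals lying outside $S_{\pi/2} = \mathbb{C}_-$, which is precisely item~1 of Lemma~\ref{E_poly_lemma} and guarantees that $W$ is analytic in $S_{\pi/2}$. It then suffices to certify $E(y) \geq 0$ for all real $y$.

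Next I would form each $E$-polynomial exactly. Reading the reported Butcher coefficients as rationals, I compute $N(z)$ and $D(z)$ from \eqref{eq:stabilityfunction1} and assemble $E(y) = |D(iy)|^2 - |N(iy)|^2$. Since $W(0) = 1$ forces $E(0) = 0$ and $E$ is even, I may always extract the factor $y^2$ and write $E(y) = y^2 \, \vec{y}^T \big(\P + \sum_j \eta_j \Nb_j\big)\vec{y}$ as in \eqref{Eq:CoefficientsOfp}--\eqref{Eq:SOS_LMI}. Because the schemes satisfy the tall-tree conditions only up to a residual, I cannot extract a higher even power, so the low-degree coefficients of $\P$ are small but generally nonzero. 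Non-negativity of $E$ is then equivalent to feasibility of the LMI $\F(\veta) \succeq 0$.

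The verification now splits by scheme. For DIRK(12,5,4) the symbolic $\P$ turns out to be diagonal with non-negative entries, so $\F = \P \succeq 0$ at $\veta = 0$ and no optimization is needed. For DIRK(7,4,4) and for DIRK(13,8)[1]A[(14,6)A], $\P$ has two negative diagonal entries, so I solve \eqref{opt:SDP_Epoly} numerically with CVX to obtain a candidate $\veta^*$, round it to a nearby rational $\bar\veta$, and then compute an exact LDL factorization $\F(\bar\veta) = \L\D\L^T$ over $\mathbb{Q}$, following the hybrid numerical and symbolic procedure of \S\ref{subsec:numericalDetails}. Verifying $\D \succ 0$ (which amounts to checking the signs of finitely many rationals) certifies $\F(\bar\veta) \succeq 0$, hence yields an SOS representation $E(y) = y^2 \|\mat{Q}\vec{y}\|^2$ with $\mat{Q}^T\mat{Q} = \F(\bar\veta)$, and therefore $E(y) \geq 0$.

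I expect the only real obstacle to be practical rather than conceptual, and it lives in the rounding step. After CVX returns a floating-point $\veta^*$ I must choose a rational $\bar\veta$ that still lies in the interior of the feasible LMI cone; rounding a genuinely interior point keeps $\D \succ 0$, but the margin can be thin, and for the $13 \times 13$ matrix arising from DIRK(13,8) the exact rational entries and their factorization can carry many digits, making the symbolic LDL step the heaviest part of the argument. Once $\D \succ 0$ is confirmed in exact arithmetic, non-negativity of each $E$-polynomial is immediate, and $A$-stability of all three schemes follows from Lemma~\ref{E_poly_lemma}.
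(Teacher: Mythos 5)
Your proposal matches the paper's own argument (its ``Strategy 1'') essentially step for step: treat the reported Butcher coefficients as exact rationals, observe that $\P\succeq 0$ already for DIRK(12,5,4), and for the other two schemes solve the $E$-polynomial SDP, round to rational $\bar{\veta}$, and certify via an exact LDL factorization with $\D\succ 0$. Your explicit dispatch of the analyticity hypothesis of Lemma~\ref{E_poly_lemma} via the triangular structure of $\A$ is a small addition the paper leaves implicit, but it does not change the route.
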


\subsubsection{Certification of $A$-stability via Strategy 2}\label{subsubsection:PertSchemes}
We now apply Strategy 2 to certify $A$-stability for schemes where Strategy 1 fails due to the $E$-polynomial for $(\mat{A}, \vec{b})$ being negative near the origin. 

Throughout, tildes denote quantities for the perturbed scheme $(\mat{\tilde{A}}, \vec{\tilde{b}})$, e.g., $\mat{\tilde{P}}$ is the matrix containing the coefficients of the $E$-polynomial $\tilde{E}(y)$. 

\begin{enumerate}
    \item DIRK(6,6)[1]A[(7,5)A] developed in \cite{AlamriKetcheson2024}: The original scheme's E-polynomial, ${E}(y)=y^2\vec{y}^T\P\vec{y}$, possesses three negative coefficients in $\mat{P}$
    \begin{align*}
        p_{0} < 0, \qquad p_2 < 0, \qquad p_4 < 0 \, .
    \end{align*}
    Given that $p_0 < 0$, $E(y)$ is negative for values of $y$ near the origin, indicating that the scheme is not $A$-stable. This inability to achieve $A$-stability directly follows from the failure to satisfy the tall-tree order conditions --- which, if satisfied, would ensure $p_0 = p_2 = p_4 = 0$.

    We introduce the perturbed scheme
    \[
        \renewcommand\arraystretch{1.5}
        \begin{aligned}
            \mat{\tilde{A}}&=\left[\begin{array}{cccccc}
            \frac{33128226}{109158329} & & & & &\\
            \text{-}\frac{254432096}{909477001} & \frac{51289103}{102571593} & & & &\\
            \frac{33289838}{118645151} & \text{-}\frac{825218320}{1881654059} & \frac{130993323}{602959172} & & &\\
            \text{-}\frac{13583292}{200438515} & \frac{156154430}{158643099} & \text{-}\frac{65409371}{245235917} & \frac{81765600}{330141853} & &\\
            \frac{16354062}{130133299} & \text{-}\frac{247816720}{248961507} & \frac{169383005}{222482121} & \text{-}\frac{49241043}{234166886} & \frac{79900588}{92184791} &\\
            \text{-}\frac{34719176}{94331171} & \text{-}\frac{155737141}{155748342} & \frac{42945649}{80312134} & \text{-}\frac{50573402}{289227347} & \frac{678237381}{1102812170} & \frac{31879369}{45767530}
            \end{array}\right],
        \end{aligned}
    \]
    where the coefficient vector $\vec{\tilde{b}} \in \mathbb{Q}^{6}$ are chosen to exactly satisfy the tall-tree conditions \eqref{eq:ordercondition} with $p = 6$. The pair $(\mat{\tilde{A}}, \vec{\tilde{b}})$ satisfy the error bound \eqref{Eq:PertSchemeErrorEst} with $\epsilon_A = 5\cdot10^{-17}$, and $\epsilon_b = 6\cdot10^{-15}$.

    $A$-stability for the perturbed scheme is certified via two approaches:
    \begin{itemize}
        \item The $E$-polynomial $\tilde{E}(y) = y^{8} \vec{y}^T \mat{\tilde{P}} \vec{y} \geq 0$ follows immediately since  $\mat{\tilde{P}} \in \mathbb{Q}^{3\times 3}$ satisfies $\mat{\tilde{P}} \succeq 0$.
        \item The modified CSTW approach \eqref{opt:SDP_CSTW} yields an LMI in 3 variables. The numerical solution produces a solution pair:
        \begin{align*}
            \tilde{\X}^*= \tilde{\L}_X \tilde{\D}_X \tilde{\L}^T_X 
            \quad \textrm{and}\quad 
            \tilde{\R}^*= \tilde{\L}_R \tilde{\D}_R \tilde{\L}^T_R \, ,
        \end{align*}
        with $\tilde{\D}_X \succeq 0$ ($\tilde{\D}_X$ has $4$ zero eigenvalues by Theorem~\ref{Thm:CSTWModified}), and $\tilde{\D}_R\succeq 0$. 
    \end{itemize}    
    \item SDIRK(9,6)[1]SAL[(9,5)A] developed in \cite{AlamriKetcheson2024}: Similar to the previous example, this scheme is not $A$-stable as the $E$-polynomial is negative near the origin. This is reflected in the matrix $\P \in \mathbb{Q}^{9 \times 9}$ having negative coefficients
    \begin{align*}
        p_{0} < 0, \qquad p_2 < 0, \qquad p_4 < 0 \, ,
    \end{align*}
     a consequence of failing to satisfy the tall-tree order conditions.
    A perturbed scheme $(\mat{\tilde{A}}, \vec{\tilde{b}})$, defined in Appendix~\ref{Appendix:PertSchemes}, satisfies the error bound \eqref{Eq:PertSchemeErrorEst} with $\epsilon_A = \epsilon_b = 8\cdot10^{-15}$.

    Similar to DIRK(6,6), $A$-stability for the perturbed scheme $(\mat{\tilde{A}}, \vec{\tilde{b}})$ is certified through two approaches:
    \begin{itemize}
        \item The $E$-polynomial $\tilde{E}(y) = y^{8} \vec{y}^T \tilde{\P} \vec{y} \geq 0$, with $\tilde{\P} \in \mathbb{Q}^{6 \times 6}$, is immediately non-negative since \(\tilde{\P} \succeq 0\), avoiding the need for solving an SDP. 
        \item The modified CSTW approach, which utilizes an LMI with $15$-degree-of-freedom, is solved via SDP. The numerical solver identifies optimal pairs \((\tilde{\R}^*, \tilde{\X}^*)\), yielding exact LDL factorizations over \(\mathbb{Q}\), with all matrices \(\D\) being positive semidefinite. 
    \end{itemize}

    \item WSO DIRK(12,5,5) developed in \cite{BiswasKetchesonSeiboldShirokoff2023}: Similar to the previous two examples, this scheme also fails to be $A$-stable due to the lowest order terms in the $E$-polynomial having negative coefficients
    \begin{align*}
        p_0 < 0,\qquad p_2 < 0,
    \end{align*} 
    implying that $E(y) < 0$ for small $y$. 
    A perturbed scheme $(\mat{\tilde{A}}, \vec{\tilde{b}})$ is defined in Appendix~\ref{Appendix:PertSchemes} and satisfies \eqref{Eq:PertSchemeErrorEst} with $\epsilon_A =\epsilon_b= 9\cdot10^{-15}$.

    For $(\mat{\tilde{A}},\mat{\tilde{b}})$, the $E$-polynomial has the form $\tilde{E}(y)=y^6\vec{y}^T\mat{\tilde{F}}(\veta)\vec{y}$, where $\mat{\tilde{P}} \in \mathbb{Q}^{10 \times 10}$ is not positive definite. The associated LMI for the non-negativity of $\mat{\tilde{F}}$ contains $\dim \mathcal{N}_{10} = 36$ degrees of freedom. The Appendix~\ref{Appendix:PertSchemes} presents a solution $\vec{\bar{\eta}}$ that yields $\mat{\tilde{F}}(\vec{\bar{\eta}}) =\mat{\tilde{L}} \mat{\tilde{D}} \mat{\tilde{L}}^T$, with $\mat{\tilde{D}} \succeq 0$, thereby certifying $A$-stability.  
\end{enumerate}

 We formalize the result in the following Lemma.

\begin{lemma} There exist perturbed schemes $(\mat{\tilde{A}}, \vec{\tilde{b}})$ for \textup{DIRK(6,6)[1]A[(7,5)A]} and \newline \textup{SDIRK(9,6)[1]SAL[(9,5)A]} from \cite{AlamriKetcheson2024}, and \textup{DIRK(12,5,5)} from \cite{BiswasKetchesonSeiboldShirokoff2023}, which 
satisfy the error bound \eqref{Eq:PertSchemeErrorEst} with $\epsilon_A, \epsilon_b = \mathcal{O}(10^{-15})$. These schemes simultaneously satisfy the tall-tree conditions and are $A$-stable. 
\end{lemma}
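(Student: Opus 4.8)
The plan is to prove the statement constructively: for each named scheme I exhibit an explicit rational pair $(\mat{\tilde{A}}, \vec{\tilde{b}})$ and then certify its $A$-stability by an exact rational computation. The construction proceeds in two stages. First I round the reported floating-point entries of $\A$ to nearby rationals $\mat{\tilde{A}} \in \mathbb{Q}^{s\times s}$ with small numerators and denominators. Holding $\mat{\tilde{A}}$ fixed, the tall-tree order conditions \eqref{eq:ordercondition} become the \emph{linear} system $\vec{\tilde{b}}^{\,T}\mat{\tilde{A}}^{\,j-1}\e = 1/j!$, $1\leq j\leq p$, in the unknown $\vec{\tilde{b}}$, with rational data. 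For DIRK(6,6), where $p=s$, this is a square system, uniquely solvable (hence $\vec{\tilde{b}}\in\mathbb{Q}^s$) whenever $[\e,\mat{\tilde{A}}\e,\ldots,\mat{\tilde{A}}^{\,s-1}\e]$ is invertible; for SDIRK(9,6) and DIRK(12,5,5), where $p<s$, the solution set is a rational affine subspace of dimension $s-p$, and I pick the rational representative closest to the reported $\b$. In every case $(\mat{\tilde{A}},\vec{\tilde{b}})$ has rational entries and satisfies the order conditions \emph{exactly} by construction.

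The second stage verifies the remaining requirements. The error bound \eqref{Eq:PertSchemeErrorEst} is a direct rational-arithmetic check: the $\mat{\tilde{A}}$ entries differ from the reported ones by the controlled rounding error, and $\vec{\tilde{b}}$ differs from $\b$ because it solves a mildly perturbed, well-conditioned linear system whose right-hand side already matched $\b$ up to the $\BigO(10^{-15})$ order residual; together these give $\epsilon_A,\epsilon_b=\BigO(10^{-15})$. To certify $A$-stability I use either approach developed earlier. Since $(\mat{\tilde{A}},\vec{\tilde{b}})$ now satisfies the order conditions, the asymptotics \eqref{EPoly_Asymptotics} force $\tilde{E}(y)=y^{2\kappa}F(y)$ with $F$ of the form \eqref{Eq:CoefficientsOfp}; moreover the eigenvalues of the (lower-triangular) $\mat{\tilde{A}}$ are its positive diagonal entries, which lie outside $S_{\pi/2}=\mathbb{C}_-$, so the analyticity hypothesis of Lemma~\ref{E_poly_lemma} holds automatically. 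I then either (i) observe that the coefficient matrix $\mat{\tilde{P}}$ is already positive semidefinite, making non-negativity of $F$ immediate (as in the $y^8$-factored cases), or (ii) solve the SOS feasibility LMI \eqref{Eq:SOS_LMI}, round the numerical optimum to $\vec{\bar{\eta}}\in\mathbb{Q}^d$, and exhibit an exact factorization $\mat{\tilde{F}}(\vec{\bar{\eta}})=\mat{\tilde{L}}\mat{\tilde{D}}\mat{\tilde{L}}^T$ with $\mat{\tilde{D}}\succeq0$. As an independent certificate I apply the modified CSTW route of Theorem~\ref{Thm:CSTWModified}, producing rational $\tilde{\R},\tilde{\X}$ satisfying \eqref{CSTW:AlgCondition} with rational LDL factors whose diagonals are PSD; here Lemma~\ref{thm:nullvec} predicts the exact zero eigenvalues of $\tilde{\X}$ that make the rational certificate attainable.

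The main obstacle is the \emph{construction}, not the verification. The reported schemes are genuinely \emph{not} $A$-stable --- their $E$-polynomials carry negative low-order coefficients $p_0,p_2,\ldots$ --- so an arbitrary nearby rational scheme need not be $A$-stable, and a crude rounding can fail. The point that makes the construction succeed, and which I would state explicitly, is that enforcing the order conditions exactly annihilates precisely those offending coefficients $p_0,p_2,\ldots$ while disturbing the genuinely non-negative higher-degree part of $F$ by only $\BigO(10^{-15})$. Thus the existence of a good perturbation rests on each reported scheme being \emph{numerically} $A$-stable, i.e.\ violating $A$-stability only through its $\BigO(10^{-15})$ order residual. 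The delicate practical step is that a particular rational rounding of $\A$ may still leave $F$ dipping slightly negative, so one may need to search over several nearby rational choices of $\mat{\tilde{A}}$ (recomputing $\vec{\tilde{b}}$ each time) until the exact LDL factorization returns $\mat{\tilde{D}}\succeq0$. Once such a rounding is found, the resulting rational LDL (or CSTW) certificate is the rigorous proof, and repeating this for all three schemes completes the argument.
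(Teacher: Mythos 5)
Your proposal matches the paper's Strategy~2 essentially step for step: round $\A$ to a nearby rational $\mat{\tilde{A}}$, solve the (linear-in-$\vec{\tilde{b}}$) tall-tree conditions exactly over $\mathbb{Q}$ choosing the free components closest to the reported $\b$, check the $\mathcal{O}(10^{-15})$ error bound, and certify $A$-stability by an exact rational LDL factorization of either the $E$-polynomial matrix $\mat{\tilde{P}}$ (or its SOS LMI) or the modified CSTW pair $(\tilde{\R},\tilde{\X})$ using Theorem~\ref{Thm:CSTWModified}. Your added observations --- that the analyticity hypothesis of Lemma~\ref{E_poly_lemma} holds because the DIRK diagonals are positive, and that a given rounding of $\mat{\tilde{A}}$ may need to be adjusted until the exact factorization certifies $\mat{\tilde{D}}\succeq 0$ --- are correct refinements of the same argument.
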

\subsection{Examples Establishing \(A(\alpha)\) Stability}\label{subsec:RamosVigo}
We now shift attention and establish rigorous bounds on \(\alpha\), for $A(\alpha)$-stability of two schemes that are not \(A\)-stable. 

First note that \(E(y^2; \beta)\), where \(\beta := \cos(\alpha)\), is a bivariate polynomial in \(y, \beta\) with rational coefficients. An initial rational value of \(\beta\) is fixed, and the associated \(E\)-polynomial SDP \eqref{opt:SDP_Epoly} is solved to determine whether the associated feasible set is nonempty. The value of \(\beta\) is incrementally decreased until \eqref{opt:SDP_Epoly} indicates that the feasible set is empty. The last \(\beta\) value before the feasible set is reported non-empty is \(\beta^*\). The corresponding $\alpha^*=\cos^{-1}(\beta^*)$ is the lower bound for the maximal angle \(\alpha\) at which the scheme is confirmed to be \(A(\alpha)\)-stable. An exact rational certificate in the form of an LDL factorization is then reported.

The result is formalized as follows:
\begin{lemma}
    The schemes \eqref{RK_ramos_vigo2007} and \eqref{eq:skvortsov} are $A(\alpha)$-stable for some maximum angle $\alpha$ that is bounded below by the angle $\alpha^*$, where $\alpha^* = \cos^{-1} \beta^*$ and $\beta^*$ is defined in \eqref{Eq:VR_BetaOpt} and \eqref{Eq:BetaOptSkv} respectively.
\end{lemma}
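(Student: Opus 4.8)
The plan is to certify, for each scheme, that it is $A(\alpha^*)$-stable at the fixed rational value $\beta=\beta^*$ supplied by \eqref{Eq:VR_BetaOpt} and \eqref{Eq:BetaOptSkv}; this immediately yields the claim, since $A(\alpha^*)$-stability implies $A(\alpha)$-stability for every $\alpha\le\alpha^*$ (because $S_{\alpha}\subseteq S_{\alpha^*}$), so the maximal angle of $A(\alpha)$-stability is bounded below by $\alpha^*$. Note that the statement is only a lower bound, so I do \emph{not} need to prove that the LMI becomes infeasible for $\beta<\beta^*$; exhibiting a single certificate at $\beta^*$ suffices. The whole argument is an instance of Lemma~\ref{E_poly_lemma}, whose two hypotheses I would verify in turn. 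Because $\beta^*$ is rational, the factored $E$-polynomial $F(y)=y^{-2}E(y^2;\alpha^*)$ from \eqref{Eq:CoefficientsOfp} is a univariate polynomial with rational coefficients; for the Ramos--Vigo scheme \eqref{RK_ramos_vigo2007}, whose Butcher coefficients lie in $\mathbb{Q}[\sqrt2]$, the $E$-polynomial coefficients are nonetheless rational, so no field extension is needed.

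First I would establish hypothesis (2), the non-negativity of $F(y)$. Following the recipe of \S\ref{subsec:numericalDetails}, I would solve the SDP \eqref{opt:SDP_Epoly} numerically for the matrix $\F(\veta)=\P+\sum_l \eta_l \Nb_l$ built from the coefficients of $F$, obtaining a candidate $\veta^*$, round to a nearby rational $\veta$, and compute an exact factorization $\F(\veta)=\L\D\L^T$ with $\L,\D\in\mathbb{Q}^{m\times m}$, verifying $\D\succeq0$ in exact arithmetic. A valid such $\D$ certifies $F(y)\ge0$ for all $y$, hence $E(y;\alpha^*)\ge0$ for $y\ge0$, which is condition \eqref{Eq:SOS_LMI} of the lemma.

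Second I would verify hypothesis (1), that $\Sf(z)$ is analytic in the interior of $S_{\alpha^*}$. For these two schemes---which are \emph{not} $A$-stable---$\Sf$ has poles in the left half-plane, so this is a genuine extra requirement rather than an automatic one, and it is precisely what prevents enlarging the angle past the pole locations. Concretely, I would compute the poles of $\Sf$, i.e.\ the roots of $D(z)=\det(\I-z\A)$ (the reciprocals of the eigenvalues of $\A$), and check in exact or interval arithmetic that none lies in the interior of $S_{\alpha^*}$. With both hypotheses in hand, Lemma~\ref{E_poly_lemma} delivers $A(\alpha^*)$-stability, completing the proof.

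The main obstacle will be producing the exact rational certificate in step two. By construction $\beta^*$ is taken just inside the feasible range of the $\beta$-scan, so the LMI at $\beta=\beta^*$ is strictly feasible; this interior point is what makes the rounding-and-rationalization robust, since an entire neighborhood of $\veta^*$ remains feasible. Had one instead attempted a certificate at the exact critical angle, the feasible set could collapse onto a lower-dimensional face (the same phenomenon analyzed in \S\ref{subsec:MainThmEx}) and rationalization would fail. A secondary technical point is making the sign check in hypothesis (1) itself rigorous: since the pole locations are algebraic numbers, I would carry out that verification via interval arithmetic, or by testing the sign of the real and imaginary parts of $D(z)$ along $\partial S_{\alpha^*}$, rather than relying on floating-point eigenvalue computations.
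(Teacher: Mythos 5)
Your proposal matches the paper's approach essentially exactly: fix the rational $\beta^*$, form the factored generalized $E$-polynomial with rational coefficients, solve the SDP \eqref{opt:SDP_Epoly}, round to a rational $\bar{\veta}$, and certify non-negativity via an exact $\L\D\L^T$ factorization with $\D\succeq 0$, which is precisely what \S\ref{subsec:RamosVigo} and Appendices~\ref{Appendix:Ramos}--\ref{Appendix:Skvortsov} carry out. If anything you are slightly more careful than the paper, which does not explicitly verify hypothesis (1) of Lemma~\ref{E_poly_lemma} (that the poles of $\Sf$ lie outside $S_{\alpha^*}$) for these two schemes, whereas you correctly flag it as a genuine, separately checkable condition.
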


\subsubsection{The IRK(4,4) Scheme of Ramos and Vigo}

The following scheme, developed by Ramos and Vigo, is a $4$-stage, $4$th order fully implicit method based on a BDF-type Chebyshev approximation \cite{ramos_vigo:2007}:
\begin{align}\label{RK_ramos_vigo2007}
    \mathbf{A} = 
    \begin{bmatrix}
        \frac{22-\sqrt{2}}{96} & \frac{5-8\sqrt{2}}{48} & \frac{22-7\sqrt{2}}{96} & \frac{-1}{16} \\[4pt]
        \frac{4+3\sqrt{2}}{24} & \frac{1}{6} & \frac{4-3\sqrt{2}}{24} & 0 \\[4pt]
        \frac{22+7\sqrt{2}}{96} & \frac{5+8\sqrt{2}}{48} & \frac{22+\sqrt{2}}{96} & \frac{-1}{16} \\[4pt]
        \frac{1}{3} & \frac{1}{3} & \frac{1}{3} & 0
    \end{bmatrix} \quad\textrm{and}\quad
    \mathbf{b} = \begin{bmatrix} \frac{1}{3} & \frac{1}{3} & \frac{1}{3} & 0 \end{bmatrix}^T.
\end{align}
By defining \(\beta := \cos(\alpha)\), the generalized \(E\)-polynomial \eqref{Def:genEpoly} becomes the following bivariate polynomial in \(y\) and \(\beta\):
\begin{align}
    E(y^2; \beta) = y^2 \Big( &y^{14} + 22\beta y^{12} + (272\beta^2 - 16)y^{10} + (1920\beta^3 + 96\beta)y^8 \nonumber \\
    &+ (6144\beta^4 + 3840\beta^2)y^6 + (36864\beta^3 + 10752\beta)y^4 + 73728\beta y^2 + 294912\beta \Big) \, .
\end{align}
When $\beta = 0$ (corresponding to $\alpha = \frac{\pi}{2}$), the $E$-polynomial 
\begin{align*}
    E\left(y; \frac{\pi}{2}\right) = y^{6} \, (y^2 - 16) \, ,
\end{align*}
is negative for $|y| < 4$, and the scheme is not $A$-stable.

Instead, we determine a lower bound for the maximal value of \(\alpha\) for which \(E(y^2; \beta) \geq 0\). The last \(\beta\) value before reaching an empty feasible set is the bound
\begin{align}\label{Eq:VR_BetaOpt}
    \beta^*=\frac{19699132}{4466212691} \qquad  (\alpha^*\approx 89.74728^o) \, .  
\end{align}

Solving Problem \eqref{opt:SDP_Epoly} with the value $\beta^*$ (details provided in Appendix~\ref{Appendix:Ramos}) produces the positive certificate
\begin{align*}
    E(y^2;\beta^*)=y^2\vec{y}^T\mat{F}(\vec{\bar{\eta}})\vec{y}\geq0
\end{align*}
where $\mat{F}(\vec{\bar{\eta}})=\L\D\L^T$, and the diagonal matrix $\D\succ0$, certifying the scheme is $A(\alpha)$-stable for maximal angle $\alpha\geq\alpha^*$. 

\subsubsection{The ESDIRK(8,6) Scheme of Skvortsov}\label{subsec:Skvortsov}
We examine the 8-stage, 6th order ESDIRK scheme of Skvortsov \cite{Skvortsov2006} characterized by its Butcher matrix:
\begin{equation}\label{eq:skvortsov}
    \renewcommand\arraystretch{1.5}
    \setlength\arraycolsep{2pt}
    \A =\left[\begin{array}{ccccccccc}
    0 & & & & & & & &\\
    \frac{1}{6} & \frac{1}{6} & & & & & & &\\
    \frac{11}{96} & \text{-}\frac{1}{32} & \frac{1}{6} & & & & & &\\
    \frac{1}{12} & \text{-}\frac{1}{4} & \frac{1}{2} & \frac{1}{6} & & & & &\\
    \text{-}\frac{2015}{15072} & \text{-}\frac{6987}{5024} & \frac{3271}{1884} & \frac{175}{471} & \frac{1}{6} & & & &\\
    \text{-}\frac{326531}{573678} & \text{-}\frac{114988}{31871} & \frac{1208156}{286839} & \frac{132950}{286839} & \frac{68}{203} & \frac{1}{6} & & &\\
    \text{-}\frac{331717945}{2106545616} & \text{-}\frac{480525599}{416107776} & \frac{2240951089}{1404363744} & \frac{394951619}{2808727488} & \text{-}\frac{5160553}{26834976} & \frac{35815}{352512} & \frac{1}{6} & &\\
    \frac{16264655341}{73026914688} & \frac{9786099235}{14425069568} & \text{-}\frac{34306812733}{48684609792} & \text{-}\frac{15985588007}{97369219584} & \frac{37652437}{930279168} & \text{-}\frac{340747}{12220416} & \frac{1}{26} & \frac{1}{6} &\\
    \frac{7}{90} & 0 & 0 & 0 & \frac{16}{45} & \text{-}\frac{4}{45} & \frac{2}{15} & \frac{16}{45} & \frac{1}{6}
    \end{array}\right]
\end{equation}
The scheme is stiffly accurate, with $\b^T$ being the last row of $\A$.

The generalized $E$-polynomial, in $y$ and $\beta$, takes the form:
\begin{align*}
    E(y^2,\beta)&=y^2 \sum_{j=0}^{15} q_{2j}(\beta) \, y^{2j} \,  \qquad \textrm{where} \qquad \beta = \cos(\alpha) \, .
\end{align*}
Appendix~\ref{Appendix:Skvortsov} presents formulas for the polynomials $q_{2j}(\beta)$.  

Similar to the scheme of Ramos and Vigo, this scheme fails to be $A$-stable. When $\beta = 0$ ($\alpha = \frac{\pi}{2}$), the $E$-polynomial simplifies to:
\begin{align*}
    E\left(y ; \frac{\pi}{2}\right)= y^8 \left(y^8 + \tfrac{61415271}{616225}y^6 -\tfrac{91554624}{3925}y^4 
    -\tfrac{3175034112}{3925}y^2 -\tfrac{4152010752}{785} \right) \, .
\end{align*}
Since the lowest order monomial term is negative, e.g., $-4152010752/785 < 0$, the polynomial $E(y; \pi/2)$ is negative for values of $y$ near the origin. 

After iterating through values of $\beta$, we establish the bound:
\begin{align}\label{Eq:BetaOptSkv}
    \beta^*=\frac{2218472195}{100000000000} \qquad  (\alpha^*\approx 88.7288^o) \, .
\end{align}
This value is consistent with the $\alpha$ bound reported in \cite{Skvortsov2006} and is now accompanied by a rigorous certificate. 

The $E$-polynomial produced by $\beta^*$ can be written as 
\begin{align*}
    E(y^2; \beta^*) = y^2 \vec{y}^T \mat{F}(\veta) \vec{y} \,,  \quad \textrm{with}\quad
    \mat{F}(\veta) = \P + \sum_{j = 1}^{105} \eta_{j} \Nb_j \, , 
\end{align*}
where $\P \in \mathbb{Q}^{16\times 16}$ is a diagonal matrix containing three negative coefficients and $d = 105$ is the dimension \eqref{eq:d}.  Numerically solving Problem \eqref{opt:SDP_Epoly} yields a solution $\veta^*$, which again we round for simplicity to a nearby rational value $\bar{\veta}$ presented in Appendix~\ref{Appendix:Skvortsov}. Substituting $\bar{\veta}$ into the LMI and factorizing yields 
\begin{align}\label{Eq:EPolyFactorizedSkvortsov}
    E(y^2;\beta^*)=y^2 \vec{y}^T \L \D \L^T \vec{y} \geq 0 \, , 
\end{align}
where $\D, \L \in \mathbb{Q}^{16 \times 16}$ with $\D$ being diagonal with non-negative entries. 
\begin{remark}
    Even though $\veta^*$ has been rounded to a nearby rational value $\bar{\veta}$ with fewer digits, which we report in the Appendix, the matrix and $\D$ in \eqref{Eq:EPolyFactorizedSkvortsov} contains integer denominators with up to 548 digits. \myremarkend
\end{remark}

\section{Discussion and Conclusions}\label{sec:Conclusion}

This paper adopts a semidefinite optimization approach to rigorously certify $A$- and $A(\alpha)$- stability in Runge-Kutta schemes and improves the algebraic conditions for $A$-stability by incorporating the order conditions. A key application of this work is the validation and construction of stable time-stepping schemes for potential implementation in industrial software. Looking ahead, these approaches and perspectives could also be useful in certifying stability in other time-integration schemes. Semidefinite programming can be used to certify stability in settings where algebraic conditions for stability are known, such as the algebraic stability of general linear methods. On the other hand, there are settings, such as $A$-stability in general linear methods, where, to our knowledge, algebraic conditions for stability have yet to be formulated, providing opportunities for future work.  

\section*{Acknowledgments}
The authors would like to thank David Ketcheson for helpful discussions.  This material is based upon work supported by the National Science Foundation under Grants No.\ DMS--2012268 and DMS--2309727. 

\section*{Data Availability}
Certificates of stability and the supporting numerical code are available online from 

\noindent\href{https://github.com/ajuhl/Algebraic-Conditions-for-Stability-Certified-via-SDP}{\texttt{https://github.com/ajuhl/Algebraic-Conditions-for-Stability-Certified-via-SDP}}.

\printbibliography

\clearpage

\appendix
\renewcommand{\thesection}{Appendix \Alph{section}:}
\renewcommand{\thesubsection}{\Alph{section}.\arabic{subsection}}

\section{Supplemental Details for \(A\)-stable Schemes}

Here, we present numerical coefficients verifying $A$-stability.

\subsection{DIRK (13,8)[1]A[(14,6)]A}\label{Appendix:DIRK13_8}
The scheme has $E$-polynomial $E(y) = y^2 \vec{y}^T \mat{F}(\vec{\bar{\eta}}) \vec{y}$ where 
\begin{align}
    \mat{F}(\vec{\bar{\eta}}) = \mat{P} + \sum_{j=1}^{66} \leta_{j} \mat{N}_j \succeq 0 \, ,
\end{align}
with coefficients of $\vec{\bar{\eta}}$ given by:
\begin{alignat*}{3}
    \leta_1 &= -8470700 & \qquad \leta_{23} &= -2465700 & \qquad \leta_{45} &= \phantom{-}16400 \\ 
    \leta_2 &= \phantom{-}0 & \qquad \leta_{24} &= \phantom{-}665333207306300 & \qquad \leta_{46} &= \phantom{-}4362777200 \\
    \leta_3 &= -3700 & \qquad \leta_{25} &= -639200 & \qquad \leta_{47} &= \phantom{-}0 \\
    \leta_4 &= -17219137300 & \qquad \leta_{26} &= -27185350128356800 & \qquad \leta_{48} &= \phantom{-}2843900 \\
    \leta_5 &= -2500 & \qquad \leta_{27} &= -58914800 & \qquad \leta_{49} &= -1564541946300 \\
    \leta_6 &= \phantom{-}1552662793500 & \qquad \leta_{28} &= \phantom{-}540480365078400 & \qquad \leta_{50} &= -4500 \\
    \leta_7 &= \phantom{-}1100 & \qquad \leta_{29} &= -3060809665186300 & \qquad \leta_{51} &= \phantom{-}338036200 \\
    \leta_8 &= -291771102600 & \qquad \leta_{30} &= \phantom{-}933800 & \qquad \leta_{52} &= \phantom{-}26403742783600 \\
    \leta_9 &= \phantom{-}3000 & \qquad \leta_{31} &= -101765843690800 & \qquad \leta_{53} &= -61100 \\
    \leta_{10} &= \phantom{-}5801136200 & \qquad \leta_{32} &= \phantom{-}0 & \qquad \leta_{54} &= -20078540400 \\
    \leta_{11} &= -10101540029400 & \qquad \leta_{33} &= -16794400 & \qquad \leta_{55} &= \phantom{-}0 \\
    \leta_{12} &= -949900 & \qquad \leta_{34} &= \phantom{-}0 & \qquad \leta_{56} &= -256500 \\
    \leta_{13} &= \phantom{-}45726880755500 & \qquad \leta_{35} &= \phantom{-}3600 & \qquad \leta_{57} &= \phantom{-}294637386500 \\
    \leta_{14} &= \phantom{-}244100 & \qquad \leta_{36} &= \phantom{-}19168235300 & \qquad \leta_{58} &= \phantom{-}600 \\
    \leta_{15} &= -2222445564500 & \qquad \leta_{37} &= \phantom{-}0 & \qquad \leta_{59} &= -63663500 \\
    \leta_{16} &= -3000 & \qquad \leta_{38} &= -14572100 & \qquad \leta_{60} &= -1286411886200 \\
    \leta_{17} &= -1556643190544700 & \qquad \leta_{39} &= \phantom{-}0 & \qquad \leta_{61} &= \phantom{-}200 \\
    \leta_{18} &= \phantom{-}1744900 & \qquad \leta_{40} &= \phantom{-}18000 & \qquad \leta_{62} &= \phantom{-}978576800 \\
    \leta_{19} &= \phantom{-}292506767260200 & \qquad \leta_{41} &= \phantom{-}16836266900 & \qquad \leta_{63} &= \phantom{-}0 \\
    \leta_{20} &= -1976300 & \qquad \leta_{42} &= \phantom{-}0 & \qquad \leta_{64} &= -5871439100 \\
    \leta_{21} &= -5815692149200 & \qquad \leta_{43} &= -3637800 & \qquad \leta_{65} &= \phantom{-}0 \\
    \leta_{22} &= -13689040894555700 & \qquad \leta_{44} &= -5738476103900 & \qquad \leta_{66} &= \phantom{-}1269200 \\
\end{alignat*}

\begin{landscape}
\subsection{The Perturbed Schemes from \S~\ref{subsubsection:PertSchemes}}\label{Appendix:PertSchemes}
In this appendix, we report the coefficients of the perturbed schemes $(\mat{\tilde{A}}, \vec{\tilde{b}})$ for which the lemmas in \S~\ref{subsubsection:PertSchemes} were established.

\bigskip

\noindent 
{\bf SDIRK(9,6)[1]SAL[(9,5)A]}: The perturbed scheme coefficients are
\begin{align*}
    \renewcommand\arraystretch{1.5}
    \setlength\arraycolsep{2pt}
    \begin{aligned}
        \mat{\tilde{A}}&=\left[\begin{array}{ccccccccc}
        \frac{87518253}{401224696} & & & & & & & &\\
        \text{-}\frac{109147862}{1208036163} & \frac{87518253}{401224696} & & & & & & &\\
        \frac{70447391}{407323275} & \text{-}\frac{60128027}{170018875} & \frac{87518253}{401224696} & & & & & &\\
        \frac{258011928}{503929669} & \frac{32776647}{1131632696} & \text{-}\frac{13636148}{946751265} & \frac{87518253}{401224696} & & & & &\\
        \frac{2139251}{459753907} & \text{-}\frac{18361775}{242765601} & \frac{13889605}{63926963} & \text{-}\frac{17789601}{861400843} & \frac{87518253}{401224696} & & & &\\
        \frac{48479320}{54097599} & \frac{222681723}{1598951647} & \text{-}\frac{6683180}{35754039} & \frac{14834219}{220428796} & \text{-}\frac{67840169}{193336343} & \frac{87518253}{401224696} & & &\\
        \frac{67080581}{121311880} & \text{-}\frac{401007739}{912707597} & \frac{169517869}{507988720} & \text{-}\frac{10552416}{310889555} & \text{-}\frac{54396621}{357996284} & \frac{12212839}{571158716} & \frac{87518253}{401224696} & &\\
        \frac{176730716}{279920507} & \frac{185311713}{255696311} & \text{-}\frac{40314204}{93283073} & \frac{92464054}{154464243} & \text{-}\frac{281536253}{397040384} & \text{-}\frac{15560941}{32151589} & \frac{170501635}{450595763} & \frac{87518253}{401224696} &\\
        0 & \tilde{b}_2 & \tilde{b}_3 & \tilde{b}_4 & \tilde{b}_5 & \tilde{b}_6 & \tilde{b}_7 & \tilde{b}_8 & \frac{87518253}{401224696}
        \end{array}\right]
    \end{aligned}
\end{align*}
where $\tilde{b}_j$ ($3 \leq j \leq 8$) is provided by solving for the tall tree conditions \eqref{eq:ordercondition} (for $p = 6$), leaving a free variable $\tilde{b}_2$ which is selected to minimize the $\ell_2$ difference $\b-\vec{\tilde{b}}$. 

\bigskip

\newpage 
\noindent
{\bf WSO DIRK(12,5,5)} The perturbed scheme coefficients are
\[
\renewcommand\arraystretch{1.75}
\setlength\arraycolsep{1pt}
\mat{\tilde{A}}=
{
\left[\begin{array}{cccccccccccc}
\frac{10747729}{261281103} & & & & & & & & & & &\\
\frac{39255733}{244819013} & \frac{19264472}{289086473} & & & & & & & & & &\\
\text{-}\frac{63980223}{186836899} & \frac{70601555}{81544818} & \frac{48699329}{492234648} & & & & & & & & &\\
\frac{757656751}{80284215} & \text{-}\frac{5452955845}{500830197} & \frac{261637874}{98954371} & \frac{86994158}{471217857} & & & & & & & &\\
\text{-}\frac{290059410}{846787661} & \frac{142027951}{274596637} & \frac{63720572}{69536691} & \frac{8719292}{166871841} & \frac{230505997}{1977768146} & & & & & & &\\
\text{-}\frac{96530823}{46089059} & \frac{278501442}{108044467} & \frac{186959114}{327745145} & \frac{31757051}{261668409} & \text{-}\frac{65608216}{138056009} & \frac{50822223}{96152122} & & & & & &\\
\frac{110645970}{326232259} & \text{-}\frac{52198210}{186593643} & \frac{48069176}{46243347} & \frac{36028733}{602611029} & \text{-}\frac{42230947}{197997752} & \frac{11234921}{134641567} & \frac{339062341}{1406835502} & & & & &\\
\frac{432515173}{73254485} & \frac{1037694284}{327224921} & \text{-}\frac{1241126819}{100347987} & \text{-}\frac{76295713}{152911958} & \frac{155195477}{71832145} & \frac{268359096}{140054533} & \frac{309243168}{155550259} & \frac{266682747}{1194765721} & & & &\\
\frac{131193951}{284188360} & \text{-}\frac{74904386}{387416395} & \text{-}\frac{136922649}{1129220324} & \frac{46345993}{695639065} & \frac{61016892}{143403385} & \frac{74333617}{94618599} & \frac{157610940}{188314681} & \frac{31846751}{198449271} & \frac{305893355}{845914548} & & &\\
\text{-}\frac{82024283}{115728139} & \frac{91013047}{140744863} & \frac{63214225}{132835881} & \text{-}\frac{178521351}{694495505} & \frac{96297873}{85736426} & \frac{97069417}{174996915} & \frac{25465272}{79767817} & \frac{11384038}{31516593} & \frac{232759857}{396742103} & \frac{83426711}{354434193} & &\\
\frac{100055236}{234642175} & \frac{213090564}{161088509} & \frac{96986289}{228435568} & \text{-}\frac{174464145}{68947184} & \text{-}\frac{67238506}{859605737} & \frac{2068579853}{1961737581} & \frac{170367931}{366730407} & \frac{198092237}{172991608} & \frac{312091183}{725567705} & \frac{159786147}{106558690} & \frac{9565123}{660600961} &\\
\frac{5486027}{454369097} & \frac{9757227}{18810635} & \frac{62278071}{555407431} & \text{-}\frac{1694527}{341651848} & \text{-}\frac{133046372}{98916929} & \frac{323864293}{952870301} & \frac{261379716}{320347663} & \tilde{b}_8 & \tilde{b}_9 & \tilde{b}_{10} & \tilde{b}_{11} & \tilde{b}_{12}
\end{array}\right]}
\]
The coefficients $\tilde{b}_j \in \mathbb{Q}$ ($8 \leq j \leq 12$) are chosen to satisfy the order conditions exactly \eqref{eq:ordercondition} for $p = 5$.

The $E$-polynomial $E(y) = y^2 \vec{y}^T \F(\vec{\leta})\vec{y} = \L \D \L^T \geq 0$ where $\vec{\leta}\cdot10^{-3}$ has coefficients:
\begin{alignat*}{6}
    \leta_{1} &= -6922561820555       &\qquad \leta_7 &= \phantom{-}3934894             &\qquad \leta_{13} &= -550                      &\qquad \leta_{19} &= -40414145977      &\qquad \leta_{25} &= \phantom{-}692417          &\qquad \leta_{31} &= -10          \\
    \leta_2 &= -6159041               & \leta_8 &= -6775128853059                      & \leta_{14} &= -1017731680875                  & \leta_{20} &= -57578                 &   \leta_{26} &= -2928211                     & \leta_{32} &= -40848              \\
    \leta_3 &= \phantom{-}401988060958 &  \leta_9 &= -4571676                           & \leta_{15} &= -1687736                        & \leta_{21} &= \phantom{-}139245638   &   \leta_{27} &= -2                            & \leta_{33} &= \phantom{-}0           \\
    \leta_4 &= \phantom{-}209390       &  \leta_{10} &= \phantom{-}142233029108         & \leta_{16} &= \phantom{-}8353405937           & \leta_{22} &= \phantom{-}147         & 	\leta_{28} &= -5711                          & \leta_{34} &= \phantom{-}859      \\   
    \leta_5 &= -3199255341            & \leta_{11} &= \phantom{-}198557                & \leta_{17} &= \phantom{-}14765                & \leta_{23} &= -553788073             &   \leta_{29} &= \phantom{-}3                  & \leta_{35} &= \phantom{-}0           \\ 
    \leta_6 &= -4095                  &   \leta_{12} &= -490076976                      & \leta_{18} &= -10433650                       & \leta_{24} &= -636                   &   \leta_{30} &= \phantom{-}143788             & \leta_{36} &= -3
\end{alignat*}
\end{landscape}

\section{Supplemental Details for \(A(\alpha)\)-stable Schemes}
\subsection{The IRK(4,4) Scheme of Ramos and Vigo}\label{Appendix:Ramos}
The scheme has $E$-polynomial $E(y^2,\beta^*) = y^2 \vec{y}^T \mat{F}(\vec{\bar{\eta}}) \vec{y}$ where 
\begin{align}
    \mat{F}(\vec{\bar{\eta}}) = \mat{P} + \sum_{j=1}^{21} \leta_{j} \mat{N}_j \succeq 0 \, ,
\end{align}
with coefficients of $\vec{\bar{\eta}}$ given by $\leta_2=\leta_4=\leta_6=\leta_8=\leta_{10}=\leta_{13}=\leta_{15}=\leta_{17}=\leta_{20}=0$ and:
\begin{alignat*}{4}
\leta_1&=-\tfrac{343818785}{387257} &\qquad \leta_7&= -\tfrac{1002782638}{963823} &\qquad \leta_{12}&= -\tfrac{140623753}{190944} &\qquad \leta_{18}&= \phantom{-}\tfrac{1407711}{121108}\\
\leta_3&=\phantom{-}\tfrac{44352332}{270307} &\leta_9&= \phantom{-}\tfrac{26195675}{165379} &\leta_{14}&= \phantom{-}\tfrac{19205029}{233487} &\leta_{19}&= -\tfrac{23169437}{338293}\\
\leta_5&=-\tfrac{7044484}{1620291} &\leta_{11}&= -\tfrac{526928}{268115} &\leta_{16}&=-\tfrac{55546025}{187031} &\leta_{21}&=-\tfrac{2727674}{410745}
\end{alignat*}

\subsection{ESDIRK(8,6) Skvortsov Scheme in \S\ref{subsec:Skvortsov}} \label{Appendix:Skvortsov}
We present coefficients for the scheme in subsection~\ref{subsec:Skvortsov}. The generalized $E$-polynomial is
\begin{align*}
    E(y,\beta)&=y^2 \sum_{j=0}^{15} q_{2j}(\beta) \, y^{2j} \,  \qquad \textrm{where} \qquad \beta = \cos(\alpha) \, , 
\end{align*}
with polynomial coefficients of $\beta$:
\begin{alignat*}{2}
    q_{30}(\beta) &= 1 && \\
    q_{28}(\beta) &= 96\beta\vphantom{\beta^1} && \\
    q_{26}(\beta) &= 4032\beta^2 +\tfrac{61415271}{616225} && \\
    q_{24}(\beta) &= 96768\beta^3 +\tfrac{73235232}{3925}\beta && \\
    q_{22}(\beta) &= 1451520\beta^4+\tfrac{3567255552}{3925}\beta^2-\tfrac{91554624}{3925} &&\\
    q_{20}(\beta) &= 13934592\beta^5 +\tfrac{14120096256}{785}\beta^3+\tfrac{9673437312}{3925}\beta \\
    q_{18}(\beta) &= 83607552\beta^6 +\tfrac{158718486528}{785}\beta^4+\tfrac{70172863488}{785}\beta^2-\tfrac{3175034112}{3925} &&    \\
    q_{16}(\beta) &= 286654464\beta^7 +\tfrac{1149206704128}{785}\beta^5+\tfrac{985309774848}{785}\beta^3+\tfrac{136916137728}{785}\beta && \\    
    q_{14}(\beta) &= 429981696\beta^8 +\tfrac{5111514906624}{785}\beta^6+\tfrac{8045011279872}{785}\beta^4+\tfrac{694870576128}{157}\beta^2-\tfrac{4152010752}{785} &&\\
    q_{12}(\beta) &= \tfrac{10416951558144}{785}\beta^7+\tfrac{41311620145152}{785}\beta^5+\tfrac{34328744275968}{785}\beta^3 +\tfrac{5012232804864}{785}\beta &&\\
    q_{10}(\beta) &= \tfrac{650132324352}{5}\beta^6+232190115840\beta^4 +121899810816\beta^2 &&\\
    q_{8}(\beta) &= \tfrac{3064909529088}{5}\beta^5+835884417024\beta^3 +121899810816\beta  &&\\
    q_6(\beta) &= 2298682146816\beta^4 +1671768834048\beta^2 &&\\
    q_4(\beta) &= 6269133127680\beta^3 +1253826625536\beta &&\\
    q_2(\beta) &= 9403699691520\beta^2 &&\\    
    q_0(\beta) &= 5642219814912\beta \, . 
\end{alignat*}

\bigskip
Solution coefficients $\vec{\bar{\eta}}$ for non-negativity of $E(y, \beta^*)$. 
\begin{alignat*}{4}
 \leta_{1}&=-\tfrac{544417542815}{1496} &\qquad\leta_{28}&=-\tfrac{12225979229715}{323} &\qquad\leta_{55}&=-\tfrac{3023415560208}{431} &\qquad\leta_{82}&=\phantom{-}\tfrac{2813400132854}{117}\\
 \leta_{3}&=\phantom{-}\tfrac{29842486335}{2687} &\leta_{30}&=\phantom{-}\tfrac{433008073700}{13} &\leta_{57}&=\phantom{-}\tfrac{667361119609}{1153} &\leta_{84}&=-\tfrac{958434399311}{491}\\
\leta_{5}&=-\tfrac{49740795}{931} &\leta_{32}&=-\tfrac{8424611000191}{98} &\leta_{59}&=-\tfrac{15606394145}{932} &\leta_{85}&=\phantom{-}\tfrac{11434535023}{204}\\
\leta_{7}&=-\tfrac{28918182864}{1151} &\leta_{34}&=-\tfrac{25273367886229}{395} &\leta_{61}&=\phantom{-}\tfrac{70764854}{907} &\leta_{87}&=-\tfrac{220302170}{863}\\
\leta_{9}&=\phantom{-}\tfrac{699763151}{1893} &\leta_{36}&=\phantom{-}\tfrac{9262966388511}{292} &\leta_{63}&=-\tfrac{6003091166555}{1293} &\leta_{89}&=\phantom{-}\tfrac{9715693027109}{1173}\\
\leta_{11}&=-\tfrac{611431235}{606} &\leta_{38}&=-\tfrac{11180085898009}{131} &\leta_{65}&=\phantom{-}\tfrac{145331883768}{577} &\leta_{91}&=-\tfrac{261194717165}{607}\\
\leta_{13}&=\phantom{-}\tfrac{4436247}{898} &\leta_{40}&=-\tfrac{10519983461815}{799} &\leta_{67}&=-\tfrac{2769061905}{757} &\leta_{93}&=\phantom{-}\tfrac{5649757805}{927}\\
\leta_{15}&=-\tfrac{20553023}{989} &\leta_{42}&=-\tfrac{13505692581791}{106} &\leta_{69}&=\phantom{-}\tfrac{2674611184492}{349} &\leta_{95}&=-\tfrac{5175464369937}{284}\\
\leta_{17}&=-\tfrac{184449}{1142} &\leta_{44}&=\phantom{-}\tfrac{13635317782915}{474} &\leta_{70}&=-\tfrac{435375081998}{497} &\leta_{96}&=\phantom{-}\tfrac{1361933288513}{864}\\
\leta_{19}&=-\tfrac{5040480296101}{64} &\leta_{46}&=-\tfrac{22015444860842}{179} &\leta_{72}&=\phantom{-}\tfrac{28638357917}{941} &\leta_{98}&=-\tfrac{161915093932}{3421}\\
\leta_{21}&=-\tfrac{20253725215145}{857} &\leta_{48}&=-\tfrac{23644956443647}{379} &\leta_{74}&=-\tfrac{18964563}{119} &\leta_{100}&=\phantom{-}\tfrac{405541643}{1810}\\
\leta_{23}&=\phantom{-}\tfrac{240883590679}{566} &\leta_{50}&=-\tfrac{2159073125275}{1597} &\leta_{76}&=-\tfrac{883878192398}{307} &\leta_{102}&=-\tfrac{4892528411838}{1501}\\
\leta_{25}&=\phantom{-}\tfrac{4696001199805}{754} &\leta_{51}&=\phantom{-}\tfrac{58606387358}{645} &\leta_{78}&=\phantom{-}\tfrac{38657713081}{326} &\leta_{103}&=\phantom{-}\tfrac{127826367589}{733}\\
\leta_{27}&=-\tfrac{26257051251763}{262} &\leta_{53}&=-\tfrac{2209952042}{1439} &\leta_{80}&=-\tfrac{1721374819}{1269} &\leta_{105}&=-\tfrac{1622486899}{642}
\end{alignat*}
All $\leta_j=0$ for any $j=1,\ldots,105$ not defined above.
\end{document}